\newtheorem{teo}{Theorem}
\newtheorem{remark}{Remark}
\newtheorem{ex}{Example}
\newtheorem{corol}{Corollary}
 \title{{\bf    Schatten-von Neumann classes with more subtle asymptotics  than one of the power type   }}
\author{Maksim \,V.~Kukushkin   \\ \\
 \small  \textit{Moscow State University of Civil Engineering, 129337,  Moscow, Russia}\\
 \textit{\small\textit{kukushkinmv@rambler.ru}} }
\date{}
\begin{document}

\maketitle

\begin{abstract}In this paper we study non-selfadjoint operators using the methods of the spectral theory. The main challenge is to represent a complete description  of an operator  belonging to the Schatten-von Neumann class  having used  the  order of the Hermitian real component. The latter fundamental result is advantageous since many theoretical statements based upon it and  one of them is a concept on the root series expansion to say nothing on a wide spectrum of applications in the theory of evolution equations. In this regard the evolution equations of fractional order with the operator function in the term not containing the time variable are involved. Constructing the operator class satisfying the asymptotic more subtle than one  of the power type, we show convexly the relevance of the obtained fundamental results.

\end{abstract}
\begin{small}\textbf{Keywords:}
 Strictly accretive operator;  Abel-Lidskii basis property;   Schatten-von Neumann  class; convergence exponent; counting function.   \\\\
{\textbf{MSC} 47B28; 47A10; 47B12; 47B10;  34K30; 58D25.}
\end{small}

\section{Introduction}

  Generally, the concept origins from the  well-known fact that the   eigenvectors   of the compact selfadjoint operator form a basis in the closure of its range. The question what happens in the case when the operator is non-selfadjoint is rather complicated and  deserves to be considered as a separate part of the spectral theory. Basically, the aim of the mentioned  part of the spectral theory   are   propositions on the convergence of the root vector series in one or another sense to an element belonging to the closure of the operator range. Here, we should note when we  say a sense we mean   Bari, Riesz, Abel (Abel-Lidskii) senses of the series convergence  \cite{firstab_lit:2Agranovich1994},\cite{firstab_lit:1Gohberg1965}. A reasonable question that appears is about minimal conditions that guaranty the desired result, for instance in the mentioned papers    the authors  considered a domain of the parabolic type containing the spectrum of the operator. In the paper \cite{firstab_lit:2Agranovich1994}, non-salfadjoint operators with the special condition imposed on the numerical range of values are considered. The main advantage of this result is a   weak condition imposed upon the numerical range of values comparatively with the sectorial condition (see definition of the sectorial operator). Thus, the convergence in the Abel-Lidskii sense  was established for an operator class wider than the class of sectorial operators. Here, we  make a comparison between results devoted to operators with the discrete spectra and operators with the compact resolvent, for they can be easily reformulated from one to another realm.

  The central idea of this paper  is to formulate sufficient conditions of the Abel-Lidskii basis property of the root functions system for a sectorial non-selfadjoint operator of the special type. Considering such an operator class, we strengthen a little the condition regarding the semi-angle of the sector, but weaken a great deal conditions regarding the involved parameters. Moreover, the central aim generates  some prerequisites to consider technical peculiarities such as a newly constructed sequence of contours of the power type on the contrary to the Lidskii results \cite{firstab_lit:1Lidskii}, where a sequence of the contours of the  exponential type was considered. Thus,  we clarify   results  \cite{firstab_lit:1Lidskii} devoted to the decomposition on the root vector system of the non-selfadjoint operator. We use  a technique of the entire function theory and introduce  a so-called  Schatten-von Neumann class of the convergence  exponent. Considering strictly accretive operators satisfying special conditions formulated in terms of the norm,   using a  sequence of contours of the power type,  we invent a peculiar  method how to calculate  a contour integral involved in the problem in its general statement.     Finally, we produce applications to differential equations in the abstract Hilbert space.
  In particular,     the existence and uniqueness theorems  for evolution equations   with the right-hand side -- a  differential operator  with a fractional derivative in  final terms are covered by the invented abstract method. In this regard such operator  as a Riemann-Liouville  fractional differential operator,    Kipriyanov operator, Riesz potential,  difference operator are involved.
Note that analysis of the required  conditions imposed upon the right-hand side of the    evolution equations that are in the scope  leads us to   relevance of the central idea of the paper. Here, we should note  a well-known fact  \cite{firstab_lit:Shkalikov A.},\cite{firstab_lit(arXiv non-self)kukushkin2018}   that a particular interest appears in the case when a senior term of the operator at least
    is not selfadjoint,  for in the contrary case there is a plenty of results devoted to the topic wherein  the following papers are well-known
\cite{firstab_lit:1Katsnelson},\cite{firstab_lit:1Krein},\cite{firstab_lit:Markus Matsaev},\cite{firstab_lit:2Markus},\cite{firstab_lit:Shkalikov A.}. The fact is that most of them deal with a decomposition of the  operator  on a sum,  where the senior term
     must be either a selfadjoint or normal operator. In other cases, the  methods of the papers
     \cite{kukushkin2019}, \cite{firstab_lit(arXiv non-self)kukushkin2018} become relevant   and allow us  to study spectral properties of   operators  whether we have the mentioned above  representation or not.   We should remark that the results of  the papers \cite{firstab_lit:2Agranovich1994},\cite{firstab_lit:Markus Matsaev}, applicable to study non-selfadjoin operators,  are   based on the sufficiently strong assumption regarding the numerical range of values of the operator. At the same time,
the methods  \cite{firstab_lit(arXiv non-self)kukushkin2018}   can be  used    in  the  natural way,  if we deal with more abstract constructions formulated in terms of the semigroup theory   \cite{kukushkin2021a}.  The  central challenge  of the latter  paper  is how  to create a model   representing     a  composition of  fractional differential operators   in terms of the semigroup theory.   We should note that motivation arouse in connection with the fact that
  a second order differential operator can be represented  as a some kind of  a  transform of   the infinitesimal generator of a shift semigroup. Here, we should stress   that
  the eigenvalue problem for the operator
     was previously  studied by methods of  theory of functions   \cite{firstab_lit:1Nakhushev1977}.
Thus, having been inspired by   novelty of the  idea  we generalize a   differential operator with a fractional integro-differential composition  in the final terms   to some transform of the corresponding  infinitesimal generator of the shift semigroup.
By virtue of the   methods obtained in the paper
\cite{firstab_lit(arXiv non-self)kukushkin2018}, we   managed  to  study spectral properties of the  infinitesimal generator  transform and obtain   an outstanding result --
   asymptotic equivalence between   the
real component of the resolvent and the resolvent of the   real component of the operator. The relevance is based on the fact that
   the  asymptotic formula  for   the operator  real component  can be  established in most  cases due to well-known asymptotic relations  for the regular differential operators as well as for the singular ones
 \cite{firstab_lit:Rosenblum}. It is remarkable that the results establishing spectral properties of non-selfadjoint operators  allow  us to implement a novel approach regarding   the problem  of the basis property of  root vectors.
   In its own turn, the application  of results connected with the basis property  covers  many   problems in the framework of the theory of evolution  equations.
  The abstract approach to the Cauchy problem for the fractional evolution equation was previously implemented in the papers \cite{firstab_lit:Bazhl},\cite{Ph. Cl}. At the same time, the  main advantage of this paper is the obtained formula for the solution of the evolution equation with the relatively wide conditions imposed upon the right-hand side,  where  the derivative at the left-hand side is supposed to be of the fractional order.
 This problem appeals to many ones that lie  in the framework of the theory of differential equations, for instance in the paper   \cite{L. Mor} the solution of the  evolution equation  can be obtained in the analytical way if we impose the conditions upon the right-hand side. We can also produce a number  of papers dealing  with differential equations which can be studied by this paper abstract methods \cite{firstab_lit:Mainardi F.}, \cite{firstab_lit:Mamchuev2017a}, \cite{firstab_lit:Mamchuev2017}, \cite{firstab_lit:Pskhu},       \cite{firstab_lit:Wyss}. The latter information gives us an opportunity to claim that the offered  approach is undoubtedly novel and relevant.

\section{Preliminaries}

Let    $ C,C_{i} ,\;i\in \mathbb{N}_{0}$ be   real constants. We   assume   that  a  value of $C$ is positive and   can be different in   various formulas  but   values of $C_{i} $ are  certain. Denote by $ \mathrm{int} \,M,\;\mathrm{Fr}\,M$ the interior and the set of boundary points of the set $M$ respectively.   Everywhere further, if the contrary is not stated, we consider   linear    densely defined operators acting on a separable complex  Hilbert space $\mathfrak{H}$. Denote by $ \mathcal{B} (\mathfrak{H})$    the set of linear bounded operators   on    $\mathfrak{H}.$  Denote by
    $\tilde{L}$   the  closure of an  operator $L.$ We establish the following agreement on using  symbols $\tilde{L}^{i}:= (\tilde{L})^{i},$ where $i$ is an arbitrary symbol.  Denote by    $    \mathrm{D}   (L),\,   \mathrm{R}   (L),\,\mathrm{N}(L)$      the  {\it domain of definition}, the {\it range},  and the {\it kernel} or {\it null space}  of an  operator $L$ respectively. The deficiency (codimension) of $\mathrm{R}(L),$ dimension of $\mathrm{N}(L)$   are denoted by $\mathrm{def}\, L,\;\mathrm{nul}\,L$ respectively.  Assume that $L$ is a closed   operator acting on $\mathfrak{H},\,\mathrm{N}(L)=0,$  let us define a Hilbert space
$
 \mathfrak{H}_{L}:= \big \{f,g\in \mathrm{D}(L),\,(f,g)_{ \mathfrak{H}_{L}}=(Lf,Lg)_{\mathfrak{H} } \big\}.
$
Consider a pair of complex Hilbert spaces $\mathfrak{H},\mathfrak{H}_{+},$ the notation
$
\mathfrak{H}_{+}\subset\subset\mathfrak{ H}
$
   means that $\mathfrak{H}_{+}$ is dense in $\mathfrak{H}$ as a set of    elements and we have a bounded embedding provided by the inequality
$$
\|f\|_{\mathfrak{H}}\leq C_{0}\|f\|_{\mathfrak{H}_{+}},\,C_{0}>0,\;f\in \mathfrak{H}_{+},
$$
moreover   any  bounded  set with respect to the norm $\mathfrak{H}_{+}$ is compact with respect to the norm $\mathfrak{H}.$
  Let $L$ be a closed operator, for any closable operator $S$ such that
$\tilde{S} = L,$ its domain $\mathrm{D} (S)$ will be called a core of $L.$ Denote by $\mathrm{D}_{0}(L)$ a core of a closeable operator $L.$ Let    $\mathrm{P}(L)$ be  the resolvent set of an operator $L$ and
     $ R_{L}(\zeta),\,\zeta\in \mathrm{P}(L),\,[R_{L} :=R_{L}(0)]$ denotes      the resolvent of an  operator $L.$ Denote by $\lambda_{i}(L),\,i\in \mathbb{N} $ the eigenvalues of an operator $L.$
 Suppose $L$ is  a compact operator and  $N:=(L^{\ast}L)^{1/2},\,r(N):={\rm dim}\,  \mathrm{R}  (N);$ then   the eigenvalues of the operator $N$ are called   the {\it singular  numbers} ({\it s-numbers}) of the operator $L$ and are denoted by $s_{i}(L),\,i=1,\,2,...\,,r(N).$ If $r(N)<\infty,$ then we put by definition     $s_{i}=0,\,i=r(N)+1,2,...\,.$
 According  to the terminology of the monograph   \cite{firstab_lit:1Gohberg1965}  the  dimension  of the  root vectors subspace  corresponding  to a certain  eigenvalue $\lambda_{k}$  is called  the {\it algebraic multiplicity} of the eigenvalue $\lambda_{k}.$
Let  $\nu(L)$ denotes   the sum of all algebraic multiplicities of an  operator $L.$ Denote by $n(r)$ a function equals to a number of the elements of the sequence $\{a_{n}\}_{1}^{\infty},\,|a_{n}|\uparrow\infty$ within the circle $|z|<r.$ Let $A$ be a compact operator, denote by $n_{A}(r)$   {\it counting function}   a function $n(r)$ corresponding to the sequence  $\{s^{-1}_{i}(A)\}_{1}^{\infty}.$ Let  $\mathfrak{S}_{p}(\mathfrak{H}),\, 0< p<\infty $ be       a Schatten-von Neumann    class and      $\mathfrak{S}_{\infty}(\mathfrak{H})$ be the set of compact operators.
    Suppose  $L$ is  an   operator with a compact resolvent and
$s_{n}(R_{L})\leq   C \,n^{-\mu},\,n\in \mathbb{N},\,0\leq\mu< \infty;$ then
 we
 denote by  $\mu(L) $   order of the     operator $L$ in accordance with  the definition given in the paper  \cite{firstab_lit:Shkalikov A.}.
 Denote by  $ \mathfrak{Re} L  := \left(L+L^{*}\right)/2,\, \mathfrak{Im} L  := \left(L-L^{*}\right)/2 i$
  the  real  and   imaginary components    of an  operator $L$  respectively.
In accordance with  the terminology of the monograph  \cite{firstab_lit:kato1980} the set $\Theta(L):=\{z\in \mathbb{C}: z=(Lf,f)_{\mathfrak{H}},\,f\in  \mathrm{D} (L),\,\|f\|_{\mathfrak{H}}=1\}$ is called the  {\it numerical range}  of an   operator $L.$
  An  operator $L$ is called    {\it sectorial}    if its  numerical range   belongs to a  closed
sector     $\mathfrak{ L}_{\iota}(\theta):=\{\zeta:\,|\arg(\zeta-\iota)|\leq\theta<\pi/2\} ,$ where      $\iota$ is the vertex   and  $ \theta$ is the semi-angle of the sector   $\mathfrak{ L}_{\iota}(\theta).$ If we want to stress the  correspondence  between $\iota$ and $\theta,$  then   we will write $\theta_{\iota}.$
 An operator $L$ is called  {\it bounded from below}   if the following relation  holds  $\mathrm{Re}(Lf,f)_{\mathfrak{H}}\geq \gamma_{L}\|f\|^{2}_{\mathfrak{H}},\,f\in  \mathrm{D} (L),\,\gamma_{L}\in \mathbb{R},$  where $\gamma_{L}$ is called a lower bound of $L.$ An operator $L$ is called  {\it   accretive}   if  $\gamma_{L}=0.$
 An operator $L$ is called  {\it strictly  accretive}   if  $\gamma_{L}>0.$      An  operator $L$ is called    {\it m-accretive}     if the next relation  holds $(A+\zeta)^{-1}\in \mathcal{B}(\mathfrak{H}),\,\|(A+\zeta)^{-1}\| \leq   (\mathrm{Re}\zeta)^{-1},\,\mathrm{Re}\zeta>0. $
An operator $L$ is called    {\it m-sectorial}   if $L$ is   sectorial    and $L+ \beta$ is m-accretive   for some constant $\beta.$   An operator $L$ is called     {\it symmetric}     if one is densely defined and the following  equality  holds $(Lf,g)_{\mathfrak{H}}=(f,Lg)_{\mathfrak{H}},\,f,g\in   \mathrm{D}  (L).$

    Consider a   sesquilinear form   $ t  [\cdot,\cdot]$ (see \cite{firstab_lit:kato1980} )
defined on a linear manifold  of the Hilbert space $\mathfrak{H}.$   Denote by $   t  [\cdot ]$ the  quadratic form corresponding to the sesquilinear form $ t  [\cdot,\cdot].$
Let   $  \mathfrak{h}=( t + t ^{\ast})/2,\, \mathfrak{k}   =( t - t ^{\ast})/2i$
   be a   real  and    imaginary component     of the   form $  t $ respectively, where $ t^{\ast}[u,v]=t \overline{[v,u]},\;\mathrm{D}(t ^{\ast})=\mathrm{D}(t).$ According to these definitions, we have $
 \mathfrak{h}[\cdot]=\mathrm{Re}\,t[\cdot],\,  \mathfrak{k}[\cdot]=\mathrm{Im}\,t[\cdot].$ Denote by $\tilde{t}$ the  closure   of a   form $t.$  The range of a quadratic form
  $ t [f],\,f\in \mathrm{D}(t),\,\|f\|_{\mathfrak{H}}=1$ is called    {\it range} of the sesquilinear form  $t $ and is denoted by $\Theta(t).$
 A  form $t$ is called    {\it sectorial}    if  its    range  belongs to   a sector  having  a vertex $\iota$  situated at the real axis and a semi-angle $0\leq\theta_{\iota}<\pi/2.$   Suppose   $t$ is a closed sectorial form; then  a linear  manifold  $\mathrm{D}_{0}(t) \subset\mathrm{D} (t)$   is
called    {\it core}  of $t,$ if the restriction   of $t$ to   $\mathrm{D}_{0}(t)$ has the   closure
$t$ (see\cite[p.166]{firstab_lit:kato1980}).   Due to Theorem 2.7 \cite[p.323]{firstab_lit:kato1980}  there exist unique    m-sectorial operators  $T_{t},T_{ \mathfrak{h}} $  associated  with   the  closed sectorial   forms $t,  \mathfrak{h}$   respectively.   The operator  $T_{  \mathfrak{h}} $ is called  a {\it real part} of the operator $T_{t}$ and is denoted by  $Re\, T_{t}.$ Suppose  $L$ is a sectorial densely defined operator and $t[u,v]:=(Lu,v)_{\mathfrak{H}},\,\mathrm{D}(t)=\mathrm{D}(L);$  then
 due to   Theorem 1.27 \cite[p.318]{firstab_lit:kato1980}   the corresponding  form $t$ is   closable, due to
   Theorem 2.7 \cite[p.323]{firstab_lit:kato1980} there exists   a unique m-sectorial operator   $T_{\tilde{t}}$   associated  with  the form $\tilde{t}.$  In accordance with the  definition \cite[p.325]{firstab_lit:kato1980} the    operator $T_{\tilde{t}}$ is called     a {\it Friedrichs extension} of the operator $L.$
Everywhere further,   unless  otherwise  stated,  we   use  notations of the papers   \cite{firstab_lit:1Gohberg1965},  \cite{firstab_lit:kato1980},  \cite{firstab_lit:kipriyanov1960}, \cite{firstab_lit:1kipriyanov1960},
\cite{firstab_lit:samko1987}.\\

\noindent{\bf   Series expansion}\\

In this paragraph, we represent two theorems   valuable from  theoretical  and applied points of view respectively.    The first one is a generalization of the Lidskii method  this  is why  following the the classical approach we divide it into two statements that can be claimed separately. The first  statement establishes a character of the series convergence having a principal meaning within the whole concept. The second statement reflects the name of convergence - Abel-Lidskii since  the latter   can be connected with the definition of the series convergence in the Abel sense, more detailed  information can be found in the monograph by Hardy G.H. \cite{firstab_lit:Hardy}. The second theorem is a valuable  application of the first one, it is based upon   suitable algebraic reasonings having noticed by the author and allowing us to involve   a fractional derivative in the first term. We should note that previously,  a concept of an operator function represented in the second term  was realized  in the paper  \cite{firstab_lit:2kukushkin2022}, where  a case corresponding to  a function   represented by a Laurent series with a polynomial regular part was considered.  Bellow, we consider  a comparatively more difficult  case obviously  related to the infinite regular part of the Laurent series and therefore  requiring  a principally different method of study.

  We can choose a  Jordan basis in   the finite dimensional root subspace $\mathfrak{N}_{q}$ corresponding to the eigenvalue $\mu_{q}$  that consists of Jordan chains of eigenvectors and root vectors  of the operator $B_{q}.$  Each chain has a set of numbers
\begin{equation}\label{12i}
  e_{q_{\xi}},e_{q_{\xi}+1},...,e_{q_{\xi}+k},\,k\in \mathbb{N}_{0},
\end{equation}
  where $e_{q_{\xi}},\,\xi=1,2,...,m $ are the eigenvectors  corresponding   to the  eigenvalue $\mu_{q}.$
Considering the sequence $\{\mu_{q}\}_{1}^{\infty}$ of the eigenvalues of the operator $B$ and choosing a  Jordan basis in each corresponding  space $\mathfrak{N}_{q},$ we can arrange a system of vectors $\{e_{i}\}_{1}^{\infty}$ which we will call a system of the root vectors or following  Lidskii  a system of the major vectors of the operator $B.$
Assume that  $e_{1},e_{2},...,e_{n_{q}}$ is  the Jordan basis in the subspace $\mathfrak{N}_{q}.$  We can prove easily (see \cite[p.14]{firstab_lit:1Lidskii}) that     there exists a  corresponding biorthogonal basis $g_{1},g_{2},...,g_{n_{q}}$ in the subspace $\mathfrak{M}_{q}^{\perp},$ where  $\mathfrak{M}_{q},$ is a subspace  wherein the operator  $B-\mu_{q} I$ is invertible. Using the reasonings \cite{firstab_lit:1kukushkin2021},  we conclude that $\{ g_{i}\}_{1}^{n_{q}}$ consists of the Jordan chains of the operator $B^{\ast}$ which correspond to the Jordan chains  \eqref{12i}, more detailed information can be found in \cite{firstab_lit:1kukushkin2021}.
It is not hard to prove   that  the set  $\{g_{\nu}\}^{n_{j}}_{1},\,j\neq i$  is orthogonal to the set $ \{e_{\nu}\}_{1}^{n_{i}}$ (see \cite{firstab_lit:1kukushkin2021}).  Gathering the sets $\{g_{\nu}\}^{n_{j}}_{1},\,j=1,2,...,$ we can obviously create a biorthogonal system $\{g_{n}\}_{1}^{\infty}$ with respect to the system of the major vectors of the operator $B.$  Consider a sum
\begin{equation}\label{3h}
 \mathcal{A} _{\nu}(\varphi,t)f:= \sum\limits_{q=N_{\nu}+1}^{N_{\nu+1}}\sum\limits_{\xi=1}^{m(q)}\sum\limits_{i=0}^{k(q_{\xi})}e_{q_{\xi}+i}c_{q_{\xi}+i}(t)
\end{equation}
where   $k(q_{\xi})+1$ is a number of elements in the $q_{\xi}$-th Jourdan chain,  $m(q)$ is a geometrical multiplicity of the $q$-th eigenvalue,
\begin{equation}\label{4h}
c_{q_{\xi}+i}(t)=   e^{ -\varphi(\lambda_{q})  t}\sum\limits_{m=0}^{k(q_{\xi})-i}H_{m}(\varphi, \lambda_{q},t)c_{q_{\xi}+i+m},\,i=0,1,2,...,k(q_{\xi}),
\end{equation}
$
c_{q_{\xi}+i}= (f,g_{q_{\xi}+k-i}) /(e_{q_{\xi}+i},g_{q_{\xi}+k-i}),
$
$\lambda_{q}=1/\mu_{q}$ is a characteristic number corresponding to $e_{q_{\xi}},$
$$
H_{m}( \varphi,z,t ):=  \frac{e^{ \varphi(z)  t}}{m!} \cdot\lim\limits_{\zeta\rightarrow 1/z }\frac{d^{m}}{d\zeta^{\,m}}\left\{ e^{-\varphi (\zeta^{-1})t}\right\} ,\;m=0,1,2,...\,,\,.
$$
More detailed information on the considered above   Jordan chains can be found in \cite{firstab_lit:1kukushkin2021}.

\vspace{0.5cm}
\noindent{\bf Previously obtained results}\\

Here, we represent  previously obtained results that will be undergone to  a thorough study since our principal challenge is to obtain an accurate description of the  Schatten-von Neumann class index of a non-selfadjoint operator.

Bellow, we represent  the  conditions of Theorem 1 \cite{kukushkin2021a} that  gives us a description of spectral properties of a non-selfadjoint operator $L$ acting in $\mathfrak{H}.$ \\

 \noindent  $ (\mathrm{H}1) $ There  exists a Hilbert space $\mathfrak{H}_{+}\subset\subset\mathfrak{ H}$ and a linear manifold $\mathfrak{M}$ that is  dense in  $\mathfrak{H}_{+}.$ The operator $L$ is defined on $\mathfrak{M}.$    \\

 \noindent  $( \mathrm{H2} )  \,\left|(Lf,g)_{\mathfrak{H}}\right|\! \leq \! C_{1}\|f\|_{\mathfrak{H}_{+}}\|g\|_{\mathfrak{H}_{+}},\,
      \, \mathrm{Re}(Lf,f)_{\mathfrak{H}}\!\geq\! C_{2}\|f\|^{2}_{\mathfrak{H}_{+}} ,\,f,g\in  \mathfrak{M},\; C_{1},C_{2}>0.
$
\\

Consider  a condition  $\mathfrak{M}\subset \mathrm{D}( W ^{\ast}),$ in this case the real Hermitian component  $\mathcal{H}:=\mathfrak{Re }\,W$ of the operator is defined on $\mathfrak{M},$ the fact is that $\tilde{\mathcal{H}}$ is selfadjoint,    bounded  from bellow (see Lemma  3 \cite{firstab_lit(arXiv non-self)kukushkin2018}), where $H=Re W.$  Hence a corresponding sesquilinear  form (denote this form by $h$) is symmetric and  bounded from bellow also (see Theorem 2.6 \cite[p.323]{firstab_lit:kato1980}). It can be easily shown  that $h\subset   \mathfrak{h},$  but using this fact    we cannot claim in general that $\tilde{\mathcal{H}}\subset H$ (see \cite[p.330]{firstab_lit:kato1980} ). We just have an inclusion   $\tilde{\mathcal{H}}^{1/2}\subset H^{1/2}$     (see \cite[p.332]{firstab_lit:kato1980}). Note that the fact $\tilde{\mathcal{H}}\subset H$ follows from a condition $ \mathrm{D}_{0}(\mathfrak{h})\subset \mathrm{D}(h) $ (see Corollary 2.4 \cite[p.323]{firstab_lit:kato1980}).
 However, it is proved (see proof of Theorem  4 \cite{firstab_lit(arXiv non-self)kukushkin2018}) that relation H2 guaranties that $\tilde{\mathcal{H}}=H.$ Note that the last relation is very useful in applications, since in most concrete cases we can find a concrete form of the operator $\mathcal{H}.$

Further, we consider the Theorem 1 statements separately. Let $W$ be a restriction of the operator  $L$ on the set $\mathfrak{M},$ without loss of generality of reasonings, we assume that $W$ is closed since the conditions  H1, H2  guaranty  that it is closeable, more detailed information in this regard is given in  \cite{kukushkin2021a}.  The following statement establishes estimates for the   Schatten-von Neumann class index, under assumptions $\mathrm{H}1,\mathrm{H}2.$ We have the following classification in terms of the operator order $\mu,$ where it is defined as follows $ \lambda_{n}(R_{H})=O  (n^{-\mu}),\,n\rightarrow\infty.$\\

\noindent $({\bf A})$  If $\mu\leq1,$ then   $\; R_{ \tilde{W} }\in  \mathfrak{S}_{p},\,p>2/\mu.$ If  $\mu>1,$ then $R_{ W }\in  \mathfrak{S}_{1}.$
 Moreover, under assumptions
$\lambda _{ n} \left(H \right)=O  (n^{ \mu}),\,\mu>0,$   the following implication holds $R_{  W}\in\mathfrak{S}_{p},\,p\in [1,\infty),\Rightarrow \mu>1/p.$    \\

 Observe  that the given above classification is far from the exact description of the Schatten-von Neumann class index. However, having analyzed  the above   implications,  we can say that   it makes a prerequisite to establish  a hypotheses $ R_{ W }\in  \mathfrak{S}^{\star} _{1/\mu},\,0<\mu<\infty,$  where
$$
 \mathfrak{S}^{\star}_{\rho}(\mathfrak{H}):=\{T\in\mathfrak{S}_{\rho+\varepsilon},\,T\,  \bar{\in}  \,\mathfrak{S}_{\rho},\,\forall\varepsilon>0 \},
$$
the following narrative is devoted to its verification.

Bellow, we represent the second statement of Theorem 1 \cite{kukushkin2021a}, where the peculiar result related to the asymptotics of the absolute value of the eigenvalue is given\\

\noindent$({\bf B})$     In the case  $\nu(R_{ W })=\infty,\,\mu \neq0,$  the following relation  holds
$$
|\lambda_{n}(R_{W})|=  o\left(n^{-\tau}    \right)\!,\,n\rightarrow \infty,\;0<\tau<\mu.
$$
Note that  this statement  will allow us to arrange brackets in the series that converges in the Abell-Lidskii sense  what would be an advantageous achievement in the later constructed theory.   \\

The following statement will allow us to spread the notion of the convergence in the Abell-Lidskii sense to the whole Hilbert space due to the Matcsaev method\\

\noindent $({\bf C})$   Assume that   $\theta< \pi \mu/2\, ,$    where $\theta$ is   the   semi-angle of the     sector $ \mathfrak{L}_{0}(\theta)\supset \Theta (W).$
Then  the system of   root   vectors  of   $R_{ W }$ is complete in $\mathfrak{H}.$\\
\newpage
\section{Main results}
\vspace{0.5cm}
\noindent{\bf 1. The main refinement of the result  A }\\

The reasonings produced bellow appeals to a compact operator $B$ what represents a most general case in the framework of the decomposition on the root vectors theory, however to obtain more peculiar results we are compelled to deploy some restricting conditions. In this regard we involve hypotheses H1,H2 if it is necessary.
 Consider the Hermitian components
$$
\mathfrak{Re} B:=\frac{B+B^{\ast}}{2},\;\;\mathfrak{Im} B:=\frac{B-B^{\ast}}{2i},
$$
it is clear that they are compact selfadjoint operators, since $B$ is compact and due to the technicalities of the given algebraic constructions.
The following relation can be established by direct calculation
$$
\mathfrak{Re}^{2} \!B+ \mathfrak{Im}^{2} \!B=\frac{B^{\ast}B+BB^{\ast}}{2},
$$
from what follows the inequality
\begin{equation}\label{eq4a}
  B^{\ast}B \leq   2 \mathfrak{Re}^{2} \!B+ 2\mathfrak{Im}^{2}B .
\end{equation}
Having analyzed the latter formula, we see that  it is rather reasonable to think over the opportunity of applying the corollary of the  minimax principle pursuing the aim to estimate the singular numbers of the operator $B.$ For the purpose of implementing the latter concept,   consider the following relation
$
\mathfrak{Re}^{2} \!B \,f_{n}=\lambda^{2}_{n} f_{n},
$
where $f_{n},\lambda_{n}$ are the eigenvectors and the eigenvalues of the operator $\mathfrak{Re}   B$ respectively.  Since the operator $\mathfrak{Re}   B$ is selfadjoint and compact then its set of eigenvalues form a basis in $\overline{\mathrm{R}(\mathfrak{Re}   B)}.$   Assume that there exists a non-zero  eigenvalue of the operator $ \mathfrak{Re}^{2} \!B $ that is different from  $\{\lambda^{2}_{n}\}_{1}^{\infty},$  then, in accordance with the well-known fact of the operator theory,  the corresponding eigenvector is orthogonal to the eigenvectors of the operator $\mathfrak{Re}   B.$  Taking into account the fact that the latter form a basis in $\overline{\mathrm{R}(\mathfrak{Re}   B)},$ we come to the conclusion that the eigenvector does not belong to  $\overline{\mathrm{R}(\mathfrak{Re}   B)}.$   Thus, the obtained contradiction proves the fact
$
\lambda_{n}(\mathfrak{Re}^{2} \!B)=\lambda^{2}_{n}(\mathfrak{Re}   B).
$
Implementing the same reasonings, we obtain
$
\lambda_{n}(\mathfrak{Im}^{2} \!B)=\lambda^{2}_{n}(\mathfrak{Im}   B).
$

Further, we   need a result by Ky Fan \cite{firstab_lit:Fan} that can be found as a corollary of the well-known Allakhverdiyev theorem,  see   Corollary  2.2 \cite{firstab_lit:1Gohberg1965} (Chapter II, $\S$ 2.3), in accordance with which, we have
$$
s_{m+n-1}(\mathfrak{Re}^{2} \!B+ \mathfrak{Im}^{2}B)\leq s_{m}(\mathfrak{Re}^{2} \!B)+s_{n}(\mathfrak{Im}^{2}B),\;\;m,n=1,2,...\,.
$$
Choosing $n=m$ and $n=m+1,$  we obtain respectively
$$
s_{2m-1}(\mathfrak{Re}^{2} \!B+ \mathfrak{Im}^{2}B)\leq s_{m}(\mathfrak{Re}^{2} \!B)+s_{m}(\mathfrak{Im}^{2}B),
$$
$$
\, s_{2m}(\mathfrak{Re}^{2} \!B+ \mathfrak{Im}^{2}B)\leq s_{m}(\mathfrak{Re}^{2} \!B)+s_{m+1}(\mathfrak{Im}^{2}B) \;\;m =1,2,...\,.
$$
At this stage of the reasonings we need involve  the sectorial property $\Theta(B)\subset \mathfrak{L}_{0}(\theta)$ which  gives us $|\mathrm{Im}( Bf,f)|\leq \tan \theta \,\mathrm{Re}(   Bf,f).$  Applying the corollary of the minimax principle to the latter relation, we   get $|\lambda_{n}(\mathfrak{Im}   B)|\leq \tan \theta \, \lambda_{n}(\mathfrak{Re}   B).$ Therefore
$$
s_{2m-1}(\mathfrak{Re}^{2} \!B+ \mathfrak{Im}^{2}B)\leq s_{m}(\mathfrak{Re}^{2} \!B)+s_{m}(\mathfrak{Im}^{2}B)=\sec^{2}\!\theta  s_{m}^{2}(\mathfrak{Re}  B),
$$
$$
\, s_{2m}(\mathfrak{Re}^{2} \!B+ \mathfrak{Im}^{2}B)\leq \sec^{2}\!\theta  s_{m}^{2}(\mathfrak{Re}  B) \;\;m =1,2,...\,.
$$
Applying the minimax principle to the formula \eqref{eq4a}, we get
$$
s_{2m-1}(B)\leq \sqrt{2}\sec \!\theta  s_{m} (\mathfrak{Re}  B),\;\;s_{2m}(B)\leq \sqrt{2}\sec \!\theta  s_{m} (\mathfrak{Re}  B), \;\;m =1,2,...\,  .
$$
This   gives us the upper estimate for the singular values of the operator $B.$

However, to      obtain the lower estimate we need involve Lemma 3.1 \cite[p.336]{firstab_lit:kato1980}, Theorem 3.2 \cite[p.337]{firstab_lit:kato1980}.
 Consider an unbounded   operator $T,\, \Theta(T)\subset \mathfrak{L}_{0}(\theta),$  in accordance with the first representation theorem   \cite[p. 322]{firstab_lit:kato1980}, we can consider its Friedrichs extension the m-sectorial operator $W,$ in its own turn   due to the results \cite[p.337]{firstab_lit:kato1980}, it has a real part $H$ which coincides with the Hermitian  real component if we deal with a bounded operator. Note that by virtue of the sectorial property the operator $H$ is non-negotive. Further, we consider the case $\mathrm{N}(H)=0$ it follows that $\mathrm{N}(H^{\frac{1}{2}})  =0.$ To prove this fact we should note that $\mathrm{def}H=0,$ considering inner product with the element belonging to $\mathrm{N}(H^{\frac{1}{2}})$ we obtain easily the fact  that it must equal to zero.
Having analyzed the proof of Theorem 3.2 \cite[p.337]{firstab_lit:kato1980}, we see that its statement remains true in the modified  form even in the case if we lift the m-accretive condition, thus under the  sectorial condition imposed upon the closed densely defined  operator $T,$ we get the following inclusion
$$
T\subset H^{1/2}(I+iG)H^{1/2},
$$
here   symbol $G$ denotes    a bounded selfadjoint operator in $\mathfrak{H}.$ However, to obtain the asymptotic formula established in Theorem 5 \cite{firstab_lit(arXiv non-self)kukushkin2018} we cannon be satisfied by the made assumptions but require the the existence of the resolvent at the point zero and its compactness. In spite of the fact that   we can  proceed our narrative under the weakened conditions regarding the operator $W$ in comparison with H1,H2, we can   claim that the statement of  Theorem 5 \cite{firstab_lit(arXiv non-self)kukushkin2018} remains true under the assumptions made above, we prefer to deploy H1,H2 what guarantees the conditions we need and at the same time provides a description of the matter under the natural point of view.  Thus, under the made assumptions, using Theorem 3.2 \cite{firstab_lit:kato1980}, we have the following representation
$$
W=H^{1/2}(I+iG)H^{1/2},\;W^{\ast}=H^{1/2}(I-iG)H^{1/2}.
$$
It follows easily from this formula that the Hermitian components of the operator $W$ are defined, we have
$
\mathfrak{Re}W =H,\;\;\mathfrak{Im}W=H^{1/2}GH^{1/2}.
$
Using the decomposition
$
W=\mathfrak{Re}W +i\mathfrak{Im}W,\;W^{\ast}=\mathfrak{Re}W -i\mathfrak{Im}W,
$
we get easily
$$
 \left(  \frac{W^{2}+W^{\ast\,2}}{2}\, f,f\right)=\left\|\mathfrak{Re}W f\right\|^{2}-\left\|\mathfrak{Im}W f\right\|^{2};\,
$$
$$
 \left(  \frac{W^{2}-W^{\ast\,2}}{2i}\, f,f\right)= \frac{1}{2}\left\{\mathfrak{Im}W \,\mathfrak{Re}Wf+\mathfrak{Re}W\, \mathfrak{Im}Wf\right\}.
$$
Using simple reasonings,  we can rewrite the above formulas in terms of Theorem 3.2 \cite{firstab_lit:kato1980}, we have
\begin{equation}\label{eq1}
\mathrm{Re}(W^{2}f,f)=\|Hf\|^{2}-\|H^{1/2}GH^{1/2}f\|^{2},\;\mathrm{Im} (W^{2}f,f)=\mathrm{Re}(H^{1/2}GH^{1/2}f,H^{1/2}f),
$$
$$
\,f\in \mathrm{D}(W^{2}).
\end{equation}
Consider a set of eigenvalues $\{\lambda_{n}\}_{1}^{\infty}$ and a complete system of orthonormal vectors $\{e_{n}\}_{1}^{\infty}$ of the operator $H,$ using the matrix form of the operator $G,$     we have
$$
\|Hf\|^{2}=\sum\limits_{n=1}^{\infty}  |\lambda _{n}|^{2}|f_{n}|^{2},\;\|H^{1/2}GH^{1/2}f\|^{2}=\sum\limits_{n=1}^{\infty} \lambda_{n} \left| \sum\limits_{k=1}^{\infty} b_{nk}\sqrt{\lambda_{k}}f_{k}\right|^{2},
$$
$$
\mathrm{Re}(H^{1/2}GH^{1/2}f,H^{1/2}f)=\mathrm{Re}\left(\sum\limits_{n=1}^{\infty} \lambda_{n}f_{n}  \sum\limits_{k=1}^{\infty} b_{nk}\sqrt{\lambda_{k}}\bar{f_{k}}\right).
$$
Applying the Cauchy-Swarcz inequality, we get
$$
 \|H^{1/2}GH^{1/2}f\|^{2}\leq  \|Hf\|^{2} \sum\limits_{ k,n=1}^{\infty}b^{2}_{ nk}\lambda_{n}/\lambda_{k} ,\;
 |\mathrm{Re}(H^{1/2}GH^{1/2}f,H^{1/2}f)|\leq \|Hf\|^{2} \left(\sum\limits_{ k,n=1}^{\infty}    b^{2}_{nk}/\lambda_{k} \right)^{1/2} .
$$
Therefore, the sectorial condition can be expressed as follows
$$
 \sum\limits_{ k,n=1}^{\infty}b^{2}_{ nk}\lambda_{n}/\lambda_{k}  +  \mathrm{ctg} \theta\left(\sum\limits_{ k,n=1}^{\infty}    b^{2}_{nk}/\lambda_{k} \right)^{1/2}< 1,
$$
where $\theta$ is the  semi-angle of the supposed sector.
Let us find the desired sectorial conditions using the following trivial estimate
$$
   \mathrm{ctg} \theta\left(\sum\limits_{ k,n=1}^{\infty}    b^{2}_{nk}/\lambda_{k} \right)^{1/2} <  \frac{ \mathrm{ctg} \theta}{\lambda_{1}} \left(\sum\limits_{ k,n=1}^{\infty}    b^{2}_{nk}\lambda_{n}/\lambda_{k} \right)^{1/2}  .
$$
Solving the corresponding quadratic equation, we obtain the desired estimate
\begin{equation}\label{eq6d}
 \left(\sum\limits_{ k,n=1}^{\infty}b^{2}_{ nk}\lambda_{n}/\lambda_{k}\right)^{1/2}<\frac{1}{2}\sqrt{\left(\frac{ \mathrm{ctg} \theta}{\lambda_{1}}\right)^{2}+4}-\frac{ \mathrm{ctg} \theta}{\lambda_{1}}.
\end{equation}
  The latter relation gives us a sectorial condition in terms of  matrices of operators $H,G$ for an arbitrary semiangle in the right half-plain. Observe that the right-hand side is less than one what gives us the desired sectorial condition in terms of the absolute  operator norm.
  Note that coefficients  $ b^{2}_{nk}\lambda_{n}/\lambda_{k} $ correspond to the matrices of the operators
$$
H^{1/2}GH^{-1/2}f=\sum\limits_{n=1}^{\infty}  \lambda^{1/2}_{n} e_{n}  \sum\limits_{k=1}^{\infty} b_{nk}\lambda^{-1/2}_{k} f_{k},\;H^{-1/2}GH^{1/2}f=\sum\limits_{n=1}^{\infty}  \lambda^{-1/2}_{n} e_{n}  \sum\limits_{k=1}^{\infty} b_{nk}\lambda^{1/2}_{k} f_{k}   .
$$
Thus, if the absolute operator norm exists, i.e.
$$
\|H^{1/2}GH^{-1/2}\| _{2}:=\left(\sum\limits_{ k,n=1}^{\infty}     b^{2}_{nk}\lambda_{n}/\lambda_{k}\right)^{1/2}<\infty,
$$
then both of them belong to a so-called Hilbert-Schmidt class simultaneously, but it is clear without involving the   absolute norm  since the above operators are adjoint. Taking into account the fact that    the right hand side of \eqref{eq6d} tends to one from bellow when $\theta$ tends to $\pi/2,$ we obtain the condition
\begin{equation}\label{eq7c}
\|H^{1/2}GH^{-1/2}\|_{2}< 1.
\end{equation}
which guarantees  the desired sectorial property. It is remarkable that,   we can write formally the obtained estimate in terms of the Hermitian components of the operator, i.e.
$$
\|\mathfrak{Im}W /\mathfrak{Re}W\|_{2}< 1.
$$
Bellow for a convenient form of writing, we will use a short-hand notation $A:=R_{ W }$ where it is necessary. The next step is to establish the asymptotic formula
\begin{equation}\label{eq8c}
\lambda_{n}\left( \frac{A+A^{\ast 2}}{2}\right)\asymp \lambda^{-1}_{n} \left(\mathfrak{Re}W^{2} \right),\;n\rightarrow\infty,
\end{equation}
however under the additional condition \eqref{eq7c}   we can show easily that conditions  H1,H2  are fulfilled for the operator $W^{2}$ in the reduced form but it is sufficient to  implement  the reasonings of   Theorem 5 \cite{firstab_lit(arXiv non-self)kukushkin2018}. More regular and simple case relates to the additional condition $\mathfrak{M}\subset \mathrm{D}(W^{2}),$ here we can choose the same spaces in  H1, but compelled to repeat the reasonings of  Theorem 5 \cite{firstab_lit(arXiv non-self)kukushkin2018} under the partly  modified hypotheses H2 in the form
$$
\left|(W^{2}f,f)_{\mathfrak{H}}\right|\! \leq \! C_{1}\|f\|^{2}_{\mathfrak{H}_{+}},\,f\in \mathfrak{M}.
$$
It should be noted that the additional condition $\mathfrak{M}\subset \mathrm{D}(W^{2})$ can be lifted, in this case we need deal with the m-sectorial extension of the operator $W^{2},$ at the same time the schemes  of reasonings represented in Theorems 4,5 \cite{firstab_lit(arXiv non-self)kukushkin2018} can be preserved.
Thus, in accordance with the reasonings of  Theorems 4,5 \cite{firstab_lit(arXiv non-self)kukushkin2018} we obtain the desired asymptotic formula \eqref{eq8c}.
 Further, we will use  the following formula obtained due to the  positiveness of the  squared Hermitian imaginary  component of the operator $A,$ we have
$$
\frac{A^{2}+A^{\ast 2}}{2}\leq A^{\ast}A+AA^{\ast}.
$$
Applying the   corollary of the well-known Allakhverdiyev theorem   (Ky Fan  \cite{firstab_lit:Fan}),   see   Corollary  2.2 \cite{firstab_lit:1Gohberg1965} (Chapter II, $\S$ 2.3),   we have
$$
\lambda_{2n}\left(  A^{\ast}A+AA^{\ast}\right)\leq \lambda_{n}(A^{\ast}A)+\lambda_{n}(AA^{\ast}),\;\lambda_{2n-1}\left(  A^{\ast}A+AA^{\ast}\right)\leq \lambda_{n}(A^{\ast}A)+\lambda_{n}(AA^{\ast}),\,n\in \mathbb{N}.
$$
Taking into account the fact $s_{n}(A)=s_{n}(A^{\ast}),$ using the minimax principle, we obtain the estimate
$$
s^{2}_{n}(A)\geq C \lambda_{2n}\left( \frac{A+A^{\ast 2}}{2}\right),\;n\in \mathbb{N},
$$
applying \eqref{eq8c}, we obtain
$$
s^{2}_{n}(A)\geq C \lambda^{-1}_{2n} \left(\mathfrak{Re}W^{2} \right),\;n\in \mathbb{N}.
$$
Here, it is rather reasonable to apply formula \eqref{eq1}, what gives us
$
\mathrm{Re}(W^{2}f,f)\leq(H^{2}f,f) ,
$
what in its own turn, collaboratively with the minimax principle leads  us to
$$
s_{n}(A)\geq C \lambda^{-1}_{2n} \left(H \right),\;n\in \mathbb{N}.
$$
In terms of series, the latter relation gives us
$$
\sum\limits_{n=1}^{\infty}s^{p}_{n}(A)\geq C \sum\limits_{n=1}^{\infty}\lambda^{-p}_{2n} \left(H \right)\geq C \sum\limits_{n=1}^{\infty} \frac{1}{n^{ p\mu}}.
$$
Thus, under the assumptions  $\lambda _{ n} \left(H \right)\leq C n^{\mu},\,\mu>0,\;n\in \mathbb{N}$ the obtained relation  makes it clear that   $R_{W}\in \mathfrak{S}_{p}$ only if when $p>1/\mu.$ Gathering, the above information as well as the previously obtained results, we come to the conclusion that if the order  $\mu$ is defined, then  $R_{W}\in \mathfrak{S}^{\star} _{1/\mu}  $ in the case $\mu\leq 1,$ since firstly, in accordance with the improved $(\mathbf{A}),$ we obtain $R_{W}\in \mathfrak{S}_{p},\,p>1/\mu,$ the latter relation does not imply $R_{W}\bar{\in} \,\mathfrak{S}_{1/\mu}.$ However, collaboratively with   the implication  $R_{  W}\in\mathfrak{S}_{p},\,p\in [1,\infty),\Rightarrow \mu p>1,$ we get the desired result. The case $\mu>1$ gives us, without additional assumptions regarding the operator $G,$ the fact $R_{W}\in \tilde{\mathfrak{S}}_{s},\,s\leq  1/\mu,$ under the additional assumption
$$
\|H^{1/2}GH^{-1/2}\|_{2}< 1,
$$
we have the same formula $R_{W}\in \mathfrak{S}^{\star} _{1/\mu}.$ However, the reasonings given above deserve to be summarized as a theorem, we admit that the peculiar non-standard way of representing the proof may seem unnatural, let alone formal requirements, but notice the  consistence and  breadth of the style, what more can make a paper readable and desirable for the reader?!

\begin{teo} Assume that conditions  H1,H2  hold,  if $\mu\leq 1$ then  $R_{W}\in  \mathfrak{S}^{\star}_{1/\mu},$  if $\mu>1$ then
$R_{W}\in    \mathfrak{S}_{s}\cup\mathfrak{S}^{\star}_{s},\,s\leq  1/\mu,$   if additionally the condition   $
\|H^{1/2}GH^{-1/2}\|_{2}< 1,
$
holds  then $R_{W}\in  \mathfrak{S}^{\star}_{1/\mu}.$
\end{teo}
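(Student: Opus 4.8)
The plan is to reduce the whole statement to two-sided bounds for the singular numbers $s_n(B)$ of $B:=R_W$ in terms of the eigenvalues $\lambda_n(H)$, and then to read off Schatten-von Neumann membership from the convergence or divergence of $\sum_n s_n^{p}(B)$. For the upper bound I would pass to the compact selfadjoint Hermitian components $\mathfrak{Re}B,\mathfrak{Im}B$ and exploit \eqref{eq4a}. Combining the eigenvalue identities $\lambda_n(\mathfrak{Re}^2B)=\lambda_n^2(\mathfrak{Re}B)$ and $\lambda_n(\mathfrak{Im}^2B)=\lambda_n^2(\mathfrak{Im}B)$, established above from completeness of the eigenvectors of $\mathfrak{Re}B$ in $\overline{\mathrm{R}(\mathfrak{Re}B)}$, with the Ky Fan corollary \cite{firstab_lit:Fan} of Allakhverdiyev's theorem (Corollary 2.2 of \cite{firstab_lit:1Gohberg1965}) and with the sectorial bound $|\lambda_n(\mathfrak{Im}B)|\le\tan\theta\,\lambda_n(\mathfrak{Re}B)$, one obtains $s_{2m-1}(B),\,s_{2m}(B)\le\sqrt{2}\,\sec\theta\,s_m(\mathfrak{Re}B)$. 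Under H1, H2 one has $\tilde{\mathcal H}=H$, so $\mathfrak{Re}B$ carries the spectral asymptotics of $R_H$ and $s_m(\mathfrak{Re}B)=O(m^{-\mu})$; hence $s_n(B)=O(n^{-\mu})$, i.e.\ $R_W\in\mathfrak{S}_p$ for every $p>1/\mu$. This is the refinement of statement (A), and it already places $R_W$ in $\mathfrak{S}_{1/\mu}\cup\mathfrak{S}^\star_{1/\mu}$, which covers the first assertion in the case $\mu>1$.

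For the case $\mu\le1$ no further machinery is required: the complementary implication contained in (A), namely $R_W\in\mathfrak{S}_p$, $p\in[1,\infty)\Rightarrow\mu p>1$, applied with $p=1/\mu\ge1$, forces $R_W\,\bar{\in}\,\mathfrak{S}_{1/\mu}$; together with the upper bound this is precisely the defining property of $\mathfrak{S}^\star_{1/\mu}$.

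For the case $\mu>1$ the implication from (A) is unavailable (now $1/\mu<1$), so I would produce the lower bound directly, which is where the hypothesis $\|H^{1/2}GH^{-1/2}\|_2<1$ enters. By the first representation theorem \cite{firstab_lit:kato1980} write $W=H^{1/2}(I+iG)H^{1/2}$ with $G$ bounded selfadjoint; expanding $\mathrm{Re}(W^2f,f)$ and $\mathrm{Im}(W^2f,f)$ as in \eqref{eq1}, applying the Cauchy--Schwarz inequality and solving the quadratic inequality \eqref{eq6d} shows that \eqref{eq7c} guarantees sectoriality of $W^2$; consequently H1, H2 hold for $W^2$ in the reduced form and Theorem 5 of \cite{firstab_lit(arXiv non-self)kukushkin2018} yields the asymptotic equivalence \eqref{eq8c}. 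Positivity of $\mathfrak{Im}^2W$ gives $\frac{A^{2}+A^{\ast 2}}{2}\le A^\ast A+AA^\ast$; the Ky Fan corollary once more, together with $s_n(A)=s_n(A^\ast)$ and the minimax principle, yields $s_n^2(A)\ge C\lambda_{2n}^{-1}(\mathfrak{Re}W^2)$, while \eqref{eq1} gives $\mathrm{Re}(W^2f,f)\le(H^2f,f)$ and hence $s_n(A)\ge C\lambda_{2n}^{-1}(H)$. Since $\lambda_n(H)\le Cn^{\mu}$, the series $\sum_n s_n^{p}(A)\ge C\sum_n n^{-p\mu}$ diverges for $p\le1/\mu$, so $R_W\,\bar{\in}\,\mathfrak{S}_{1/\mu}$, and with the upper bound $R_W\in\mathfrak{S}^\star_{1/\mu}$.

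The main obstacle is the passage to $W^2$: one has to check that the matrix-form sectorial condition \eqref{eq6d}, and hence its operator-norm consequence \eqref{eq7c}, really confines $\Theta(W^2)$ to a proper sector, and that Theorem 5 of \cite{firstab_lit(arXiv non-self)kukushkin2018} survives the passage to the (possibly only m-sectorial extension of the) squared operator so that \eqref{eq8c} holds; once \eqref{eq8c} is in place, the remaining singular-value bookkeeping with the Ky Fan inequality and the minimax principle is routine.
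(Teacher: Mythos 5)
Your proposal reproduces the paper's argument essentially step for step: the same Ky Fan / Allakhverdiyev upper bound $s_{2m-1}(B),s_{2m}(B)\le\sqrt{2}\sec\theta\,s_m(\mathfrak{Re}B)$ for the refinement of (A), the same use of the implication in (A) to handle $\mu\le1$, and the same lower bound via the Kato factorization $W=H^{1/2}(I+iG)H^{1/2}$, the sectoriality criterion \eqref{eq6d}--\eqref{eq7c} for $W^2$, the asymptotic equivalence \eqref{eq8c}, and the inequality $\tfrac{A^{2}+A^{\ast2}}{2}\le A^{\ast}A+AA^{\ast}$. (Two small slips of terminology that do not affect the logic: it is $\mathfrak{Im}^{2}A$, not $\mathfrak{Im}^{2}W$, whose positivity gives the last inequality, and the factorization comes from Kato's \emph{second} representation theorem, Theorem 3.2, not the first.)
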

\begin{proof}
The reasonings corresponding to the theorem proof are represented above in a completely  expanded form.
\end{proof}
Ostensibly,  we come to the remarkable  formula
$
R_{W}\in  \mathfrak{S}^{\star}_{1/\mu}
$
that was previously known for selfadjoint compact operators only.

\vspace{0.5cm}
\noindent{\bf 2. More subtle  asymptotics than one of the power type}\\

In this paragraph, we aim to produce an example of an operator which real part or Hermitian component if it is defined has more subtle asymptotics than one of the power type considered above in detail. Observe the following condition
\begin{equation}\label{eq2}
 (\ln^{1+\kappa}x)'_{\lambda_{n}(H)}  =o(  n^{-\kappa}   ),\; \kappa>0.
\end{equation}
In the paper \cite{firstab_lit:1kukushkin2021}, there was firstly  considered an example of the sequence of the eigenvalues of the real part that satisfy the condition and at the same time the following relation holds
$$
\forall \varepsilon>0:\; n^{-\kappa-\varepsilon}< \frac{1}{\lambda_{n}(H)}  <\frac{C}{n^{\kappa}\ln^{\kappa} \lambda_{n}(H)}  .
$$
Thus, we can contemplate   that  the notion of the order in the used above sense as well as the asymptotics of the power type are spoiled. However, this kind of asymptototics allows us to deploy fully the technicalities given by the Fredholm determinant. The matter that can be turned out to be as a question is "Whether there exists an operator defined analytically which real part satisfies the condition?" It will be the crucial point of our narrative and we approach it from various points of view.
Here,  we demonstrate the mentioned above example having drawn  the reader attention to the fact  that   we lift restrictions on $\kappa$ made in \cite{firstab_lit:1kukushkin2021} dictated by lack of  proposition $(\mathbf{A}).$
\begin{ex}\label{E1} Here we   produce an example of the sequence   $\{\lambda_{n} \}_{1}^{\infty}$  that satisfies   condition \eqref{eq2}, whereas
 $$
\sum\limits_{n=1}^{\infty}\frac{1}{|\lambda_{n}|^{1/\kappa}}=\infty.
$$
\end{ex}
Consider a sequence $\lambda_{n}=n^{\kappa}\ln^{\kappa} (n+q) \cdot \ln^{\kappa}\ln (n+q),\,q>e^{e}-1,\;n=1,2,...,\,.$  Using the integral test for convergence, we can easily see that the previous series is divergent. At the same time substituting, we get
$$
\frac{\ln^{\kappa}\lambda_{n}}{\lambda_{n}} \leq
 \frac{ C\ln^{\kappa}(n+q)  }{n^{\kappa}\ln^{\kappa} (n+q) \cdot \ln^{\kappa}\ln (n+q)}= \frac{ C }{n^{\kappa}   \cdot \ln^{\kappa}\ln (n+q)},
$$
what gives us the fulfilment of the  condition.\\

\noindent{\bf 3. Functional calculus of non-selfadjoint operators}\\

The following consideration  are not being reduced to a trivial case since in consequence with the statement $(\mathbf{C})$ the operator $W$ has an infinite set of eigenvectors. Indeed, since the algebraic multiplicities are finite-dimensional and the system of the root vectors is complete in $\mathfrak{H},$ then the set of the eigenvalues is infinite and we have the claimed statement. Thus,   hypotheses  H1, H2 gives us an opportunity  to avoid a trivial case at least connected with the finite dimension of the eigenvector subspace. However, we can obviously impose this condition directly  without worry of loosing generality. Bellow, we consider a sector $\mathfrak{L}_{0}(\theta_{0},\theta_{1}):=\{z\in \mathbb{C},\, \theta_{0}\leq arg z \leq\theta_{1}\},\,-\pi<\theta_{0}<\theta_{1}<\pi$ and use a short-hand notation $\mathfrak{L}_{0}(\theta):=\mathfrak{L}_{0}(-\theta,\theta).$ We denote by $\lambda_{n},e_{n},\;n\in \mathbb{N}$ the eigenvalues and eigenvectors of the operator $W$ respectively  and define an operator function  as follows
$$
\varphi(W)e_{n}=\lim\limits_{t\rightarrow+0}\frac{1}{2 \pi i}\int\limits_{\vartheta(B)}e^{-\varphi (\lambda) t}\varphi (\lambda)R_{W}(\lambda)e_{n}d\lambda,
$$
where $\varphi$ is an analytic function with a regular growth within  the following  domain   containing  the numerical range of values of the operator argument
 $$
 \vartheta(B):=\left\{\lambda:\;|\lambda|=r>0,\, \theta_{0}-\varepsilon \leq\mathrm{arg} \lambda \leq \theta_{1}+\varepsilon \right\}\cup\left\{\lambda:\;|\lambda|>r,\;
  \mathrm{arg} \lambda =\theta_{0}-\varepsilon ,\,\mathrm{arg} \lambda =\theta_{1}+\varepsilon \right\},
  $$
  $$
  \;\Theta(B) \subset  \mathfrak{L}_{0}(\theta_{0},\theta_{1}),
 $$
where $\varepsilon$ is an arbitrary positive number.
 Bellow, we will show that the definition is correct and coincides with the one given in  \cite{firstab_lit(frac2023)} as well as the  information on the analytic functions with the growth regularity. Using simple reasonings involving properties of the resolvent, Cauchy integral formula e.t.c., we get
$$
\varphi(W)e_{n}=e_{n}\lim\limits_{t\rightarrow+0}\frac{1}{2 \pi i}\int\limits_{\vartheta(B)}e^{-\varphi (\lambda) t}\frac{\varphi (\lambda)}{\lambda_{n}-\lambda} d\lambda=
   \lim\limits_{t\rightarrow+0}e^{-\varphi (\lambda_{n}) t}\varphi(\lambda_{n}) e_{n}=\varphi(\lambda_{n}) e_{n}.
$$
Here, we ought to explain that we managed to pass to the limit considering the contour integrals by virtue of the growth regularity of the function, a complete scheme of  reasonings is represented in \cite{firstab_lit(axi2022)}, \cite{firstab_lit(frac2023)}. Thus, the given above definition corresponds to the closure of the operator function considered in \cite{firstab_lit(frac2023)} (see Lemma 3) and defined on the subset of elements depended on the parameter i.e.
$$
U_{t}e_{n} =\frac{1}{2 \pi i}\int\limits_{\vartheta(B)}e^{-\varphi (\lambda) t} R_{W}(\lambda)e_{n}d\lambda.
$$
Consider the invariant space $\mathfrak{N}$ generated by eigenvectors of the operator, we mentioned above that it is an infinite dimensional space endowed with the stricture of the initial Hilbert space, hence we can consider a restriction of the operator  $R_{W}(\lambda)$ on the space $\mathfrak{N},$ where $\lambda$ does not take values of eigenvalues. By virtue of such an approach and uniqueness of the decomposition on basis vectors in the Hilbert space, we can represent the operator function  defined on elements of $\mathfrak{N}$ in the form of series on eigenvectors
$$
 \varphi (W)f=\sum\limits_{n=1}^{\infty} e_{n} \varphi(\lambda_{n})f_{n},\,f\in \mathrm{D}_{1}(\varphi),
$$
where
$$
\mathrm{D}_{1}(\varphi):=\left\{f\in \mathfrak{N}:\; \sum\limits_{n=1}^{\infty}  | \varphi(\lambda_{n})f_{n}|^{2}<\infty\right\}.
$$
Here, we ought to make a short digression devoted to the closure of the operator $\varphi(W).$ Firstly, consider the following reasonings showing that the operator admits   closure. It was shown in the paper \cite{firstab_lit KRAUNC} that
in accordance with the definition (see (5.6) \cite[p. 165]{firstab_lit:kato1980}),  if there exist  simultaneous   limits  $u^{(j)}_{k}(t)\rightarrow u^{(0)},\; \varphi(W)u^{(j)}_{k}(t)\rightarrow u^{(j)},\,k\rightarrow  \infty,\,t\rightarrow +0,\,j=1,2\,,$  then $u^{(1)}=u^{(2)},$ here the high indexes used to establish the  difference between elements.   Note that the resolvent $R_{W}(\lambda)$ is defined on $\mathfrak{N}$ and admits the following decomposition
$$
R_{W}(\lambda)f=\sum\limits_{n=1}^{\infty}\frac{f_{n}}{\lambda_{n}-\lambda}e_{n},\,f\in \mathfrak{N},
$$
therefore having substituted the latter relation to the formula of the operator function, we get
$$
\varphi(W)f=  \lim\limits_{t\rightarrow+0}\frac{1}{2 \pi i}\int\limits_{\vartheta(W)}e^{-\varphi (\lambda) t}\varphi (\lambda)\sum\limits_{n=1}^{\infty}\frac{f_{n}e_{n}}{\lambda_{n}-\lambda} d\lambda=
$$
$$
=\lim\limits_{t\rightarrow+0}\frac{1}{2 \pi i}\sum\limits_{n=1}^{\infty} f_{n}e_{n} \int\limits_{\vartheta(W)}\frac{ e^{-\varphi (\lambda) t}\varphi (\lambda)}{\lambda_{n}-\lambda} d\lambda=
 \lim\limits_{t\rightarrow+0}\sum\limits_{n=1}^{\infty}f_{n}e_{n} \!\mathop{\operatorname{res}}\limits_{z=\lambda_{n}}  \{  e^{-\varphi (z) t}\varphi (z)\}=
 $$
 $$
 =\lim\limits_{t\rightarrow+0}\sum\limits_{n=1}^{\infty}f_{n}e_{n}     e^{-\varphi (\lambda_{n}) t}\varphi (\lambda_{n})=\sum\limits_{n=1}^{\infty}f_{n}e_{n}     \varphi (\lambda_{n}),\,f\in \mathrm{D}_{1}(\varphi).
$$
The justification of an opportunity to  integrate  the series termwise  is based upon the fact that the latter series is convergent in the sense of the norm. The passage to the limit when $t$ tends to zero is justified by the same fact. Thus, we have obtained the equivalence of definitions, compare with the one given in \cite{firstab_lit(frac2023)}). It is clear that considering the set $U_{t}f,\;f\in \mathfrak{N},\;t>0,$ we can expand the domain of definition of the operator function $\varphi$ at the same time the extension remains closed as one can easily see.

Suppose $\mathfrak{H}:=\mathfrak{N}$ and let us construct a space $\mathfrak{H}_{+}$ satisfying the condition of compact embedding $\mathfrak{H}\subset\subset \mathfrak{H}_{+}$ and suitable for spreading  the condition H2 upon the operator $\varphi(W).$ For this purpose, define
$$
\mathfrak{H}_{+}:=\left\{f\in  \mathfrak{N}:\;\|f\|^{2}_{\mathfrak{H}_{+}}= \sum\limits_{n=1}^{\infty}  | \varphi(\lambda_{n})||f_{n}|^{2}<\infty\right\},
$$
and let us prove the fact $\mathfrak{H}\subset\subset \mathfrak{H}_{+}.$ The idea of the proof is based on the application of the criterion of compactness in Banach spaces, let us involve the operator $B: \mathfrak{H}\rightarrow \mathfrak{H}$ defined as follows
$$
Bf=\sum\limits_{n=1}^{\infty}  | \varphi(\lambda_{n})|^{-1/2} f_{n} e_{n},
$$
here we are assuming without loss of generality that  $|\varphi(\lambda_{n})|\uparrow \infty.$ Note  that in any case, we can rearrange the sequence in the required way  having imposed  a condition of the growth regularity upon the operator function. Observe that if $\|f\|<K=\mathrm{const},$ then
$$
\|R_{k}Bf\|=\sum\limits_{n=k}^{\infty}  | \varphi(\lambda_{n})|^{-1 } |f_{n}|^{2}\leq\frac{\|f\|^{2}}{|\varphi (\lambda_{k})|} <\frac{K^{2}}{|\varphi (\lambda_{k})|},\,k\in \mathbb{N}.
$$
Therefore, in accordance with the compactness criterion in Banach spaces  the operator $B$ is compact. Now, consider a set bounded in the sense of the norm $\mathfrak{H}_{+},$ we will denote its elements by $f,$ thus in accordance with the above, we have
$$
\sum\limits_{n=1}^{\infty}  | \varphi(\lambda_{n})||f_{n}|^{2}<C.
$$
It is clear that the element $g:=\{|\varphi(\lambda_{n})|^{1/2} f_{n}\}_{1}^{\infty}$ belongs to $\mathfrak{H}$ and the set of elements from $\mathfrak{H}$ corresponding to the bounded set of elements from $\mathfrak{H}_{+}$ is bounded also. This is why the operator $B$ image of the set of elements $g$ is compact, but we have
$ Bg=f.$ The latter relation proves the fact that the set of elements $f$ bounded in the sense of the norm $\mathfrak{H}_{+}$ is a bounded set in the sense of the norm $\mathfrak{H}.$ Thus, we have established the fulfilment of condition H1, it is obvious that we can choose a span of $\{e_{n}\}_{1}^{\infty}$ as the mentioned   linear manifold $\mathfrak{M}.$

The verification of the first   relation of  H2 is implemented due to direct application of the Cauchy-Schwarz inequality, we left it to the reader. Let us verify the fulfilment of the  second condition, here we need impose a sectorial condition upon the analytic  function $\varphi$ i.e. it should preserve in the open right-half plain the closed sector belonging to the latter, then we have

$$
\mathrm{Re}\sum\limits_{n=1}^{\infty}    \varphi(\lambda_{n}) |f_{n}|^{2}\geq \sec \psi  \sum\limits_{n=1}^{\infty}    |\varphi(\lambda_{n})| |f_{n}|^{2} ,
$$
where $\psi$ is the half-angle of the sector contenting  the image of the analytic function $\varphi.$  Thus, the condition  H2 is fulfilled. Observe that
$$
\mathfrak{Re}\varphi(W)f=\sum\limits_{n=1}^{\infty}  \mathrm{Re}\varphi(\lambda_{n})f_{n}e_{n},\;f\in \mathrm{D}_{1}(\varphi).
$$
Therefore, implementing obvious reasonings, we get
\begin{equation}\label{eq3}
\lambda_{n}\{\mathfrak{Re}\varphi(W)\}=\mathrm{Re}\varphi(\lambda_{n}  ),\,n\in \mathbb{N}.
\end{equation}
The fact   that there does not exist additional eigenvalues of the operator  $\mathfrak{Re}\varphi(W)$ follows from the fact that $\{e_{n}\}_{1}^{\infty}$ forms a basis in $\mathfrak{N}$ and can be established easily by implementing the scheme of reasonings applied above to the similar cases.

Note that condition \eqref{eq2} plays the distinguished role in the refinement  of the Lidskii results \cite{firstab_lit:1kukushkin2021} since  it guaranties   the equality of the convergence exponent and   the order of summation in the Abell-Lidskii sense. It is rather reasonable to expect that we are highly motivated to produce a concrete example of the operator satisfying the condition \eqref{eq2} for if we found it then it would stress the significance and novelty of the papers
 \cite{firstab_lit:1kukushkin2018,firstab_lit(arXiv non-self)kukushkin2018,kukushkin2019,firstab_lit:1kukushkin2021,kukushkin2021a,firstab_lit:2kukushkin2022,firstab_lit(axi2022),firstab_lit(frac2023)}.
Having been inspired  by the idea, we can use the function considered in Example \ref{E1} as an indicator to find the desired operator function. Thus,  to satisfy condition $\eqref{eq2},$ having taken into account relation \eqref{eq3}, we can impose the following condition: for sufficiently large numbers $n\in \mathbb{N},$  the following   relation holds
\begin{equation}\label{eq4}
  C_{1}<\frac{(n \ln n\cdot \ln\ln n)^{\kappa}}{\mathrm{Re}  \varphi(\lambda_{n}  ) } <C_{2},\;\kappa>0.
\end{equation}
Consider   a function   $\psi(z)=  z^{\xi}\ln z \cdot \ln\ln z ,\,0<\xi\leq1$ in the sector $|\arg z|\leq \theta,$ where for the simplicity of reasonings the   branch of the power function has been chosen so that it acts onto the sector and  we have chosen the branch of the logarithmic function  corresponding to the value   $\phi:=\arg z .$
Let us find the real and imaginary parts of the function $\psi(z),$ we have
$$
z^{\xi}\ln z \cdot \ln\ln z= |z|^{\xi}e^{i\xi\phi} \left(\ln|z|+i\phi\right)\left(a+i \arctan \frac{\phi}{\ln|z|}\right),
$$
where, we denote $a:=\ln|\ln|z|+i\phi| .$ Thus, separating the real and imaginary parts of the function $\psi(z),$ we have
$$
|z|^{\xi}e^{i\xi\phi}\left\{a\ln|z|-\phi \arctan \frac{\phi}{\ln|z|}+i\left( a\phi+\ln|z|\arctan \frac{\phi}{\ln|z|}  \right) \right\}=
$$
$$
|z|^{\xi}\cos \xi\phi\left(a\ln|z|-\phi \arctan \frac{\phi}{\ln|z|} \right)-|z|^{\xi}\sin \xi\phi\left( a\phi+\ln|z|\arctan \frac{\phi}{\ln|z|} \right)+
$$
$$
+i\left\{|z|^{\xi}\sin \xi\phi\left( a\ln|z|-\phi \arctan \frac{\phi}{\ln|z|}\right)+ |z|^{\xi}\cos \xi\phi\left( a\phi+\ln|z|\arctan \frac{\phi}{\ln|z|} \right)\right\}.
$$
It leads us to the following estimate
$$
\frac{\mathrm{Re }\,\psi(z)}{\mathrm{Im} \,\psi(z)} \leq
\frac{\tan \xi\theta\left( a\ln|z|-\phi \arctan  \ln^{-1}\!|z|^{1/\phi}  \right)+  \left( a\phi+\ln|z|\arctan  \ln^{-1}\!|z|^{1/\phi} \right)}{ \left(a\ln|z|-\phi \arctan  \ln^{-1}\!|z|^{1/\phi} \right)-\tan \xi\theta\left( a\phi+\ln|z|\arctan  \ln^{-1}\!|z|^{1/\phi} \right)}.
$$
The latter   gives us an opportunity to claim that for arbitrary $\varepsilon>0,$ there exists $R(\varepsilon)$ such that the following estimate holds
$$
\frac{\mathrm{Re }\psi(z)}{\mathrm{Im} \psi(z)}< (1+\varepsilon)\tan \xi\theta,\,|z|>R(\varepsilon).
$$
Apparently, we can claim that the function $\psi(z)$ nearly preserves the sector $|\arg z|\leq \theta,$ what is completely sufficient for our reasonings for we are dealing with the  neighborhood  of the infinitely distant point.   Let us calculate the absolute value, we have
$$
|\psi(z)|^{2}=|z|^{2\xi}\left\{ (a\ln|z|)^{2}+  ( \phi \arctan  \ln^{-1}\!|z|^{1/\phi}  )^{2}+   ( a\phi   )^{2}+ (  \ln|z|\arctan  \ln^{-1}\!|z|^{1/\phi}  )^{2} \right\}=
$$
$$
= |z|^{2\xi}\left\{  a ^{2}(\ln^{2}|z|+\phi^{2})+   (\ln^{2}|z|+\phi^{2}) \arctan^{2}  \ln^{-1}\!|z|^{1/\phi}     \right\}=
$$
$$
 = |z|^{2\xi}   (\ln^{2}|z|+\phi^{2})(a ^{2}+\arctan^{2}  \ln^{-1}\!|z|^{1/\phi}),
$$
  the latter relation establishes the growth regularity of the function $\psi(z).$
Note that   we obtain the following formula
$$
\mathrm{Re}\,\psi^{\kappa} (z)=|z|^{\xi\kappa}   \left\{\ln^{2}|z|+\phi^{2})^{\kappa/2}(\ln^{2}|\ln|z|+i\phi|+\arctan^{2}  \ln^{-1}\!|z|^{1/\phi})^{\kappa/2}
 \cos(\kappa\arg  \psi )\right\},\;\kappa\in(0,1),
$$
here we should note the distinguished fact that can be  obtained easily from the  above  $\arg\psi (z)\rightarrow \xi\arg z,\,|z|\rightarrow\infty.$ Hence
$$
   \mathrm{Re}\,\psi^{\kappa} (z) \sim  \psi^{\kappa}(|z|)   \cos(\xi\kappa \arg  z),
$$
from what follows the condition $\xi\!\kappa\, \theta<\pi/2$ which guarantees positiveness of $\mathrm{Re}\,\psi^{\kappa} (z)$  for sufficiently large   value   $|z|.$
Here, we should make a short narrative digression and remind that we pursue a rather particular aim to produce an example of an operator so we are free in some sense to choose an operator as an object for our needs. At the same time the given above reasonings    origin  from the fundamental scheme  and as a result allow to construct a fundamental theory. Define the function $\varphi(z):=\psi^{\kappa}(z)$ and consider the operator $W$  such that  $|\lambda_{n}(W )|\asymp n^{1/\xi}.$   Note that  a simple observation gives us the fact
 $\mathrm{D}_{1}(W)\subset\mathrm{D}_{1}(\varphi),$ where
$$
\mathrm{D}_{1}(W):=\left\{f\in \mathfrak{N}:\; \sum\limits_{n=1}^{\infty}  |  \lambda_{n} f_{n}|^{2}<\infty\right\}.
$$
 Therefore, the operator $W$ has a normal restriction $W_{1}$ on $\mathrm{D}_{1}(W)$ and we have $ s_{n}(W_{1})=|\lambda_{n}(W)|,\,n\in \mathbb{N}.$
At the same time, due to the sectorial property of the operator $W,$ we have $|\lambda_{n}(W)|\asymp \mathrm{Re} \lambda_{n}(W),$ taking into account the fact
$
 \mathrm{Re} \lambda_{n}(W)=\lambda_{n}(\mathfrak{Re} W_{1} )  ,\,n\in \mathbb{N},
$
which can be proved due  to the analogous  reasonings   corresponding to   relation \eqref{eq3}, we get finally
 $$
  \mu(\mathfrak{Re} W_{1})=1/\xi .
 $$ Note that the inverse chain of reasonings is obviously correct,  thus  we obtain a description in terms of asymptotics of the  real component  eigenvalues. Eventually, the given above reasonings lead us to the conclusion that     relation \eqref{eq4} holds and we obtain the desired operator with more subtle asymptotics of the real  component than the asymptotics of the power type. We are pleased to represent it in the refined forms due to the ordinary  properties of the exponential function
$$
\varphi(W)f=\lim\limits_{t\rightarrow+0}\frac{1}{2 \pi i}\int\limits_{\vartheta(W)}
 \left(\ln \lambda\right)^{ -t \varphi(\lambda)/\ln\ln\lambda }
\varphi(\lambda) \,R_{W}(\lambda)fd\lambda,\,f\in \mathrm{D}_{1}(W),
$$
we have
$$
\varphi(W)f=\lim\limits_{t\rightarrow+0}\frac{1}{2 \pi i}\int\limits_{\vartheta(W)} \lambda^{ -t \varphi(\lambda)/   \ln  \lambda  }
\varphi(\lambda) \,R_{W}(\lambda)fd\lambda,\,f\in \mathrm{D}_{1}(W).
$$
Now,  observe  benefits and  disadvantages of the idea to involve the restriction of the operator $\varphi(W)$ on the space $\mathfrak{N}.$ Apparently, the main disadvantage is the requirement in accordance with which we need deal with the expansion of the adjoint operator, since we have  $W_{1}\subset W \Rightarrow W^{\ast}\subset W^{\ast}_{1}.$ This follows that the orders of $\mathfrak{Re} W$ and $\mathfrak{Re} W_{1}$ may differentiate, what brings us the essential inconvenience if we want to provide a description in terms of the class $\mathfrak{S}^{\star}_{1/\mu}.$\\

\noindent{\bf 4. Domain of definition of the operator function including well-known operators}\\

However,   let us consider a remarkably showing case  $\mu=1$ corresponding  to the so-called quasi-trace  operator class $\mathfrak{S}^{\star}_{1},$   consider the  operator $
L:=a_{2}\Delta+a_{0},
$ with a constant  complex coefficients   acting in $L_{2}(\Omega),$ here $\Omega\subset \mathbb{E}^{2}$ is a bounded domain with a sufficiently smooth boundary. It is clear that the operator is normal, moreover it is well-known fact that under the conditions  imposed upon $\Omega,$ we have the following relation for the operator order:  $\mu =m/n,$ where $m$ is the highest derivative and $n$ is  a dimensional of the Euclidian space. Thus, in this case we have $\mu(\mathfrak{Re}L)=1.$ It is not hard to prove that $\mathrm{N}(L^{\ast})=0,$ hence $\overline{\mathrm{R}(L)}=L_{2}(\Omega).$ Using this fact, we can claim that $\mathfrak{Re}L$ and $L$ have the same eigenfunctions, the proof is given above for the analogous case, hence we can implement the above scheme of reasonings and consider the operator function $\varphi(L)$ for which condition \eqref{eq4} and as a result condition \eqref{eq2} holds. It is remarkable that   despite of  the fact that the hypotheses $\mathrm{H}1,\mathrm{H}2$ hold for the operator $L$ in the natural  way, the explanation  is left to the reader, we can use the benefits of the scheme introduced above to construct the required pair of Hilbert spaces.

The next case, within  the scale  of most important ones, appeals to the so-called quasi-Hilbert-Schmidt class $\mathfrak{S}^{\star}_{2}.$  In this regard, let us consider the Sturm-Liouville operator $L,$  where the corresponding Euclidian space is one-dimensional. Thus, we obtain the order $\mu(\mathfrak{Re}L)=2$ and this is why have been compelled to choose $\xi=1/2,$ the general scheme of the reasonings are absolutely analogous to the previous case. However, in order to make a clear demonstration let us consider a simplest case corresponding to the operator
$
Lu:=-u'',\; u(0)=u(\pi  )=0,
$
acting in $L_{2}(I),\,I:=(0,\pi ).$ Recall that the general solution of the homogeneous equation $-u''-\lambda u=0,\,\lambda\in \mathbb{R}^{+}$ is given by the following formula
$
u=C_{1}\sin \sqrt{\lambda}x+C_{2}\cos\sqrt{\lambda}x.
$
Using the initial conditions, we find $C_{2}=0,\,\sin\sqrt{\lambda}\,\pi=0.$ Hence $\lambda_{n}=n^{2},\, e_{n}(x)=\sin n  x,\;n\in \mathbb{N}$ are non-zero eigenvalues and  eigenvectors respectively. Note that the closure of the linear  span of the functions  $\sin n  x,\;n\in \mathbb{N}$ gives us the Hilbert space $L_{2}(I).$   The given above theory tells us that the operator function $\varphi(L)$ is defined on the $\mathrm{D}_{1}(L)$ which, in this case, coincides with the functions having a fourier coefficients with a  sufficiently large decrease so that the following series converges in the sense of $L_{2}(I)$ norm, i.e.
$$
\sum\limits_{n=1}^{\infty}n^{2}f_{n}e_{n}(x)\in L_{2}(I).
$$
In addition, we should add that we can consider an arbitrary non-selfadjoint differential and fractional differential operators assuming that the functional space is defined on the bounded  domain of an Euclidian space with a sufficiently   smooth boundary  (regular operators). In most of such  cases the minimax principle can be applied and we can obtain the asyptotics of the eigenvalues of the Hermitian real component, here we can referee a detailed description represented in the monograph by Rozenblyum  G.V. \cite{firstab_lit:Rosenblum}. The case corresponding to an unbounded domain (irregular operator) is also possible for study, in this regard  the Fefferman concept   covers such  problems \cite[p.47]{firstab_lit:Rosenblum}.
 The given above  theoretical results  can be applied to   the operator class  and we can construct in each case a corresponding operator function representing to the reader    the example of an operator with a more subtle asymptotics of the Hermitian real component eigenvalues than one of the power type. Bellow, we represent well-known non-selfadjoint operators which can be considered as an operator argument.\\

\noindent{\it 1. Kolmogorov operator}\\

The relevance of the considered operator is justified by resent results by Goldstein et al.   \cite{firstab_lit:Goldstein} where the following operator has been undergone to the rapt attention
$$
L=\Delta+\frac{\nabla\rho}{\rho}\cdot\nabla,
$$
here $\rho$ is a probability density on $\mathbb{R}^{N}$ satisfying $\rho\in C^{1+\alpha}_{lok}(\mathbb{R}^{N})$
  for some
$\alpha\in (0, 1),\; \rho(x) > 0$ for all $x\in \mathbb{R}^{N}.$

Apparently,  the results \cite{firstab_lit(arXiv non-self)kukushkin2018}, \cite{kukushkin2021a}, \cite{firstab_lit:1kukushkin2021} can be applied to the operator after an insignificant modification. A couple of words on the difficulties appearing while we study the operator composition.  Superficially, the problem  looks pretty well  but it is not so for the inverse operator (one need prove that it is a resolvent)  is a composition of an unbounded operator and a resolvent of the operator $W,$  indeed  since $R_{W}W=I,$ then formally, we have
$
L^{-1}f= R_{W}\rho f.
$
Most likely,   the general theory created in the papers \cite{firstab_lit(arXiv non-self)kukushkin2018}, \cite{kukushkin2021a} can be adopted to some operator composition but it is  a tremendous work. Instead of that, in the paper  \cite{firstab_lit(frac2023)}   we succeed   to  find a suitable pair of Hilbert spaces what allows us to apply theoretical results.\\

\noindent{\it 2. The linear combination of the second order differential operator and the   Kipriyanov operator}\\

  Consider a linear combination of the uniformly elliptic operator, which is written in the divergence form, and
  a composition of a   fractional integro-differential  operator, where the fractional  differential operator is understood as the adjoint  operator  regarding  the Kipriyanov operator  (see  \cite{kukushkin2019,firstab_lit:kipriyanov1960,firstab_lit:1kipriyanov1960})
\begin{equation*}
 L  :=-  \mathcal{T}  \, +\mathfrak{I}^{\sigma}_{ 0+}\phi\, \mathfrak{D}  ^{ \beta  }_{d-},
\; \sigma\in[0,1) ,
 $$
 $$
   \mathrm{D}( L )  =H^{2}(\Omega)\cap H^{1}_{0}( \Omega ),
  \end{equation*}
where
$\,\mathcal{T}:=D_{j} ( a^{ij} D_{i}\cdot),\,i,j=1,2,...,m,$
under    the following  assumptions regarding        coefficients
\begin{equation} \label{12}
     a^{ij}(Q) \in C^{2}(\bar{\Omega}),\, \mathrm{Re} a^{ij}\xi _{i}  \xi _{j}  \geq   \gamma_{a}  |\xi|^{2} ,\,  \gamma_{a}  >0,\,\mathrm{Im }a^{ij}=0 \;(m\geq2),\,
 \phi\in L_{\infty}(\Omega).
\end{equation}
 Note that in the one-dimensional case, the operator $\mathfrak{I}^{\sigma }_{ 0+} \phi\, \mathfrak{D}  ^{ \beta  }_{d-}$ is reduced to   a  weighted fractional integro-differential operator  composition, which was studied properly  by many researchers
    \cite{firstab_lit:2Dim-Kir,firstab_lit:15Erdelyi,firstab_lit:9McBride,firstab_lit:nakh2003}, more detailed historical review  see  in \cite[p.175]{firstab_lit:samko1987}.        \\

\noindent{\it 3. The linear combination of the second order differential operator and the  Riesz potential}\\

Consider a   space $L_{2}(\Omega),\,\Omega:=(-\infty,\infty)$    and the Riesz potential
$$
I^{\beta}f(x)=B_{\beta}\int\limits_{-\infty}^{\infty}f (s)|s-x|^{\beta-1} ds,\,B_{\beta}=\frac{1}{2\Gamma(\beta)  \cos  (\beta \pi / 2)   },\,\beta\in (0,1),
$$
where $f$ is in $L_{p}(\Omega),\,1\leq p<1/\beta.$
It is  obvious that
$
I^{\beta}f= B_{\beta}\Gamma(\beta) (I^{\beta}_{+}f+I^{\beta}_{-}f),
$
where
$$
I^{\beta}_{\pm}f(x)=\frac{1}{\Gamma(\beta)}\int\limits_{0}^{\infty}f (s\mp x) s ^{\beta-1} ds,
$$
these operators are known as fractional integrals on the  whole  real axis   (see \cite[p.94]{firstab_lit:samko1987}). Assume that the following  condition holds
 $ \sigma/2 + 3/4<\beta<1 ,$ where $\sigma$ is a non-negative  constant.
 Following the idea of the   monograph \cite[p.176]{firstab_lit:samko1987},
 consider a sum of a differential operator and  a composition of    fractional integro-differential operators
\begin{equation*}
 W   :=  D^{2} a  D^{2}  +   I^{\sigma}_{+}\,\xi \,I^{2(1-\beta)}D^{2}+\delta I,\;\mathrm{D}(W)=C^{\infty}_{0}(\Omega),
\end{equation*}
where
$
\xi(x)\in L_{\infty}(\Omega),\, a(x)\in L_{\infty}(\Omega)\cap C^{ 2  }( \Omega ),\, \mathrm{Re}\,a(x) >\gamma_{a}(1+|x|)^{5}.
$

\vspace{0.5cm}

\noindent{\it 4. The perturbation of the  difference operator with the artificially constructed operator }\\

Consider a   space $L_{2}(\Omega),\,\Omega:=(-\infty,\infty),$   define a family of operators
$$
T_{t}f(x):=e^{-c t}\sum\limits_{k=0}^{\infty}\frac{(c \,t)^{k}}{k!}f(x-d\mu),\,f\in L_{2}(\Omega),\;c,d>0,\; t\geq0,
$$
where convergence is understood in the sense of $L_{2}(\Omega)$ norm. In accordance with the Lemma 6 \cite{kukushkin2021a}, we know that  $T_{t}$ is a $C_{0}$ semigroup of contractions, the corresponding  infinitesimal generator and its adjoint operator are defined by the following expressions
 $$
Yf(x)=c[f(x)-f(x-d)],\,Y^{\ast}f(x)=c[f(x)-f(x+d)],\,f\in L_{2}(\Omega).
$$
Let us find a representation for fractional powers of the operator $Y.$ Using formula  (45) \cite{kukushkin2021a}, we get
$$
   Y^{\beta}f=\sum\limits_{k=0}^{\infty}M_{k}f(x-kd), \,f\in L_{2}(\Omega),
   \,M_{k}= -\frac{\beta\Gamma(k -\beta)}{k!\Gamma(1 -\beta)}c^{\,\beta},\,\beta\in (0,1).
   $$
Consider the operator
$$
L:= Y^{\ast}\!a Y+b Y^{\beta}+ Q^{\ast}N Q,
$$
where   $a,b\in L_{\infty}(\Omega),\;Q$ is a   closed operator acting in $L_{2}(\Omega),\,Q^{-1}\in \mathfrak{S}_{\!\infty}(L_{2}),$ the operator $N$ is strictly accretive, bounded, $\mathrm{R}(Q)\subset \mathrm{D}(N).$ Note that   Theorem 14 \cite{kukushkin2021a}  gives us a compleat substantiation of the  given above theoretical approach application.\\

\noindent{\bf 5. General  approach in constructing an  operator with a more subtle asymptotics }\\

Recall that an arbitrary compact operator $B$ can be represented as a series on the basis vectors due to the so-called polar decomposition $B=U|B|,$ where $U$ is a concrete unitary operator, $|B|:=(B^{\ast}B)^{1/2},$ i.e.    using the system of eigenvectors  $\{e_{n}\}_{1}^{\infty},$ we have
\begin{equation}\label{eq6}
Bf=\sum\limits_{n=1}^{\infty}s_{n}(B)(f,e_{n})g_{n},
\end{equation}
where $ e_{n}, s_{n} $ are   the eigenvectors and eigenvalues of the operator $|B|$ respectively, $g_{n}=Ue_{n}.$ It is clear that the letter elements form a an orthonormal system due to the major property of the unitary operator. The main concept that lies in the framework of our consideration relates to the following statement.\\

\noindent $(\mathrm{S} 1)$ {\it Under the assumptions $B\in \mathfrak{S}^{\star}_{\rho},\,0<\rho<\infty,\,\Theta(B) \subset   \mathfrak{L}_{0}(\theta),$  a sequence of natural numbers $\{N_{\nu}\}_{0}^{\infty}$ can be chosen so that
 \begin{equation*}\label{eq6a}
 \frac{1}{2\pi i}\int\limits_{\vartheta(B)}e^{-\lambda^{\alpha}t}B(I-\lambda B)^{-1}f d \lambda =\sum\limits_{\nu=0}^{\infty} \mathcal{A}_{\nu}(\lambda^{\alpha},t)f,
$$
where
$\vartheta(B):=\left\{\lambda:\;|\lambda|=r>0,\,|\mathrm{arg} \lambda|\leq \theta+\varepsilon\right\}\cup\left\{\lambda:\;|\lambda|>r,\; |\mathrm{arg} \lambda|=\theta+\varepsilon\right\},\,\alpha$ is a positive number.
 Moreover
$$
\sum\limits_{\nu=0}^{\infty}\left\|\mathcal{A}_{\nu}(\psi,t)f\right\|<\infty.
\end{equation*} }

 Note that in accordance with the Lemma 3 \cite{firstab_lit:1kukushkin2021}, we have
\begin{equation}\label{eq7}
\ln r\frac{n(r)}{r^{\rho }}\rightarrow0,\Longrightarrow   \ln r \left(\int\limits_{0}^{r}\frac{n(t)}{t^{p+1 }}dt+
r\int\limits_{r}^{\infty}\frac{n(t)}{t^{p+2  }}dt\right)r^{p-\rho  } \rightarrow 0,\,r\rightarrow\infty,
\end{equation}
where $\rho$ is a convergence exponent (non integer) defined as follows  $\rho:=\inf \lambda,$
$$
 \sum\limits_{n=1}^{\infty}s_{n}^{\,\lambda}(B)<\infty,
$$
 $n(t)$ is a counting function  corresponding to the singular numbers of the operator $B,$ the number $\lambda=p+1$ is the smallest natural number for which the latter series is convergent. To produce an operator class with more subtle asymptotics, we can deal with representation \eqref{eq6} directly imposing conditions upon the singular numbers instead of considering Hermitian real  component. Assume that the sequence of singular numbers has a non integer  convergent exponent  $\rho$ and  consider the following condition
$$
 (\ln^{1+1/\rho}x)'_{s^{-1}_{m}(B)}  =o(  m^{-1/\rho}   ).
$$
This gives us
$$
\frac{m\ln s^{-1}_{m}(B)}{s^{-\rho}_{m}(B)}\leq C\cdot  \alpha_{m},\;\alpha_{m}\rightarrow 0,\,m\rightarrow\infty.
$$
Taking into account the facts
$
n(s^{-1}_{m})=m;\,\;n(r)= n(s^{-1}_{m}),\,s^{-1}_{m}<r<s^{-1}_{m+1},
$
using the monotonous property of the functions, we get
\begin{equation}\label{eq11a}
\ln r\frac{n(r)}{r^{\rho}}  <  C\cdot  \alpha_{m},\;s^{-1}_{m}<r<s^{-1}_{m+1},
\end{equation}
i.e. we obtain the following implication

$$
 (\ln^{1+1/\rho}x)'_{s^{-1}_{m}(B)}  =o(  m^{-1/\rho}   ),\Longrightarrow \left(\ln r\frac{n(r)}{r^{\rho }}\rightarrow0\right),
$$
therefore the premise in \eqref{eq7} holds  and as a result the consequence holds. Absolutely analogously, we can obtain the implication
\begin{equation}\label{eq11}
  s^{-1}_{m}(B)   =o(  m^{-1/\rho}   ), \Longrightarrow \left(\frac{n(r)}{r^{\rho}}\rightarrow 0\right),
\end{equation}
here the corresponding  example is given by $s_{m}=(m \ln m)^{-1/\rho}$ and the reader can verify directly that $B\in \mathfrak{S}_{\rho}^{\star}.$   Using these facts, we can reformulate Theorem 2, 4 \cite{firstab_lit:1kukushkin2021} for an artificially constructed compact operator. In this paper, we represent   modified  proofs  since there are some difficulties that require refinement, moreover, we produce the variant  of the proof of Theorem 4 \cite{firstab_lit:1kukushkin2021} corresponding to the case      $\Theta(B) \subset   \mathfrak{L}_{0}(\theta),\,\theta<  \pi/2\alpha,$ that is not given in \cite{firstab_lit:1kukushkin2021}.

\begin{teo}\label{T2} Assume that $B$ is a compact operator, $\Theta(B) \subset   \mathfrak{L}_{0}(\theta),\,\theta< \pi/2\alpha ,\;B\in \mathfrak{S}^{\star} _{\alpha},$
$$
s_{n}(B)=o(n^{-1/\alpha}),
$$
where $\alpha$ is positive non integer.
Then the statement of   Theorem 2 \cite{firstab_lit:1kukushkin2021} remains true, i.e. statement S1 holds.
\end{teo}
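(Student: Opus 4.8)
The plan is to follow the Lidskii scheme as carried out in Theorem 2 \cite{firstab_lit:1kukushkin2021}, namely to integrate the resolvent over an expanding family of contours and collect residues, the only novelty being that the family is now of the power type; the passage to such a family is precisely what is made admissible by the hypotheses $B\in\mathfrak{S}^{\star}_{\alpha}$, $s_{n}(B)=o(n^{-1/\alpha})$ together with the non-integrality of $\alpha$. First I would enumerate the characteristic numbers $\lambda_{q}=1/\mu_{q}$ of $B$ in the order of non-decreasing modulus; since $\mu_{q}\in\overline{\Theta(B)}\subset\overline{\mathfrak{L}_{0}(\theta)}$, all of them lie strictly inside the sector $|\arg\lambda|<\theta+\varepsilon$ and $|\lambda_{q}|\uparrow\infty$. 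Invoking implication \eqref{eq11} and relation \eqref{eq7} (Lemma 3 \cite{firstab_lit:1kukushkin2021}) I would read off from the behaviour of the counting function $n_{B}(r)=o(r^{\alpha})$ an admissible growth rate and choose a sequence of radii $R_{\nu}$ of the power type, $R_{\nu}\asymp\nu^{\gamma}$ with $\gamma$ depending only on $\alpha$ and the density of $\{\lambda_{q}\}$, so that every circle $|\lambda|=R_{\nu}$ keeps a definite distance from the spectrum and the annular sector $\{R_{\nu}<|\lambda|<R_{\nu+1},\,|\arg\lambda|<\theta+\varepsilon\}$ contains exactly the characteristic numbers with indices $N_{\nu}<q\le N_{\nu+1}$; this fixes $\{N_{\nu}\}_{0}^{\infty}$ and decomposes $\vartheta(B)$ into the closed contours $\gamma_{\nu}$ bounding these annular sectors.

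Next, applying the residue theorem to the meromorphic vector-valued function $\lambda\mapsto e^{-\lambda^{\alpha}t}B(I-\lambda B)^{-1}f$ on the region bounded by $\gamma_{\nu}$, I would identify
$$
\frac{1}{2\pi i}\int\limits_{\gamma_{\nu}}e^{-\lambda^{\alpha}t}B(I-\lambda B)^{-1}f\,d\lambda=\mathcal{A}_{\nu}(\lambda^{\alpha},t)f,
$$
where the right-hand side is exactly \eqref{3h}: the pole of the resolvent at $\lambda_{q}$ has its principal part expressed through the Jordan chains \eqref{12i} and the biorthogonal system $\{g_{n}\}$, while the Taylor expansion of the analytic factor $e^{-\lambda^{\alpha}t}$ about $\lambda_{q}$ reproduces the coefficients \eqref{4h} with the functions $H_{m}(\lambda^{\alpha},\lambda_{q},t)$. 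Summing over $\nu$, the circular arcs at $R_{\nu}$ occur twice with opposite orientations and cancel in the telescoping sum, and the arc at radius $R_{M}\to\infty$ contributes nothing because $|e^{-\lambda^{\alpha}t}|\le e^{-t\cos(\alpha(\theta+\varepsilon))|\lambda|^{\alpha}}$ on the whole sector (here $\alpha(\theta+\varepsilon)<\pi/2$ is used), so that $\int_{\vartheta(B)}(\cdot)=2\pi i\sum_{\nu\ge0}\mathcal{A}_{\nu}(\lambda^{\alpha},t)f$, provided the series converges absolutely.

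The absolute convergence is the heart of the matter and the main obstacle. On each circle $|\lambda|=R_{\nu}$ I would bound $\|B(I-\lambda B)^{-1}\|$ through a minimum-modulus estimate for the regularized Fredholm determinant of $I-\lambda B$, of genus $\lfloor\alpha\rfloor$ since $\alpha\notin\mathbb{N}$, an entire function of order $\le\alpha$ by $B\in\mathfrak{S}^{\star}_{\alpha}$, combined with a Carleman-type inequality and with the $s$-number estimates and the sectorial factorisation $W=H^{1/2}(I+iG)H^{1/2}$ from Section 1; the crucial gain is that the logarithmic loss in the determinant estimate is absorbed by the estimate \eqref{eq7}, whose left-hand side tends to zero, and by the power-type choice of $R_{\nu}$, which makes the number $N_{\nu+1}-N_{\nu}$ of eigenvalues captured at step $\nu$ grow slowly enough. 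Feeding these into
$$
\left\|\mathcal{A}_{\nu}(\lambda^{\alpha},t)f\right\|\le\frac{1}{2\pi}\int\limits_{\gamma_{\nu}}\left|e^{-\lambda^{\alpha}t}\right|\,\left\|B(I-\lambda B)^{-1}f\right\|\,|d\lambda|
$$
yields a summable majorant for $\{\|\mathcal{A}_{\nu}(\lambda^{\alpha},t)f\|\}$, which completes the argument. The delicate point, where all three hypotheses on $B$ are used simultaneously, is the calibration of the exponent $\gamma$ in $R_{\nu}\asymp\nu^{\gamma}$ so that the circles avoid the spectrum, keep the determinant bounded below, and render $\sum_{\nu}\|\mathcal{A}_{\nu}(\lambda^{\alpha},t)f\|<\infty$ at once; the non-integrality of $\alpha$ enters here, guaranteeing that the canonical product associated with $\{\lambda_{q}\}$ has genus exactly $\lfloor\alpha\rfloor$ and a clean minimum-modulus behaviour.
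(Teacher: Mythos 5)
Your outline reproduces the general Lidskii scheme that stands behind Theorem 2 of \cite{firstab_lit:1kukushkin2021}, but it does not close the argument at the two places where the work actually lies, and at the decisive point it relies on a claim that cannot be guaranteed. The paper's proof of this theorem is a reduction: the only new ingredient is the transfer of the counting-function asymptotics from $B$ to the power $B^{m+1}$, $m=[\alpha]$. From the Ky Fan (Allakhverdiyev) corollary one gets $s_{(m+1)(k-1)+1}(B^{m+1})\leq s_{k}^{m+1}(B)$, hence $n_{B^{m+1}}(r^{m+1})\leq (m+1)\,n_{B}(r)$ as in \eqref{eq13a}; combined with the implication \eqref{eq11} supplied by the hypothesis $s_{n}(B)=o(n^{-1/\alpha})$ this yields $n_{B^{m+1}}(r^{m+1})/r^{\alpha}\rightarrow 0$, after which the machinery of Theorem 2 \cite{firstab_lit:1kukushkin2021} applies verbatim, because the resolvent is handled through $(I-\lambda B)^{-1}=(I-\lambda^{m+1}B^{m+1})^{-1}(I+\lambda B+\dots+\lambda^{m}B^{m})$ and the \emph{ordinary} Fredholm determinant of $B^{m+1}$, which lies in a Schatten class of index $(\alpha+\varepsilon)/(m+1)<1$. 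Your sketch never makes this transfer; the substitute you propose (a genus-$[\alpha]$ regularized determinant of $B$ itself plus ``a Carleman-type inequality'') is a genuinely different route in principle, but you supply none of the quantitative estimates it would require, and your appeal to the factorization $W=H^{1/2}(I+iG)H^{1/2}$ is out of place here, since $B$ is an abstract compact sectorial operator unrelated to that construction.

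The second gap is the selection of contours. You assert that one can take $R_{\nu}\asymp\nu^{\gamma}$ so that every circle $|\lambda|=R_{\nu}$ ``keeps a definite distance from the spectrum'' and the determinant stays bounded below on it; in general no prescribed power-type sequence of radii has this property, since the characteristic numbers may cluster arbitrarily. In the paper (see the expanded reasoning in the proof of Theorem \ref{T3}) the lower bound for $|\Delta_{B^{m+1}}(\lambda^{m+1})|$ comes from Theorem 11 \cite[p.33]{firstab_lit:Eb. Levin}, which is valid only outside an exceptional set of circles of controlled total radius; one then selects $\tilde{R}_{\nu}$ inside a prescribed ring avoiding that set, and it is precisely the condition $n_{B}(r)=o(r^{\alpha})$, transferred to $B^{m+1}$ as above, that makes the resulting bound $e^{\gamma(|\lambda|)|\lambda|^{\alpha}}|\lambda|^{m}$ compatible with the factor $e^{-t|\lambda|^{\alpha}\sin\alpha\delta}$ and gives the summability of the arc contributions. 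Your residue identification of the bracketed partial sums with \eqref{3h}--\eqref{4h} and your estimate of $\mathrm{Re}\,\lambda^{\alpha}$ on the sector are fine, but without the counting-function estimate for $B^{m+1}$ (or a fully worked-out regularized-determinant substitute) and without the exceptional-set argument replacing the ``definite distance'' claim, the convergence $\sum_{\nu}\|\mathcal{A}_{\nu}\|<\infty$, which is the substance of statement S1, is not established.
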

\begin{proof}
Applying corollary of the well-known Allakhverdiyev theorem,  see   Corollary  2.2 \cite{firstab_lit:1Gohberg1965} (Chapter II, $\S$ 2.3), we get
$$
s_{(m+1)(k-1)+1}(B^{m+1})\leq s_{k}^{m+1}(B),\,m\in \mathbb{N}.
$$
Note that $n_{B}(r)=k,$ where $s_{k}^{-1}(B)<r<s_{k+1}^{-1}(B).$ Therefore
$$
s^{-1}_{(m+1)k+1}(B^{m+1})\geq s_{k+1}^{-m-1}(B)>r^{m+1},
$$
and we have
\begin{equation}\label{eq13a}
n_{B^{m+1}}(r^{m+1})\leq (m+1)k= (m+1) n_{B}(r),
\end{equation}
hence using \eqref{eq11}, we obtain
\begin{equation*}
   \frac{n_{B^{m+1}}(r^{m+1})}{r^{\alpha} }\rightarrow 0,\,r\rightarrow\infty,\, m=[\alpha].
\end{equation*}
The rest part of the proof is absolutely analogous to Theorem 2 \cite{firstab_lit:1kukushkin2021}.
 \end{proof}
\begin{teo}\label{T3} Assume that $B$ is a compact operator, $\Theta(B) \subset   \mathfrak{L}_{0}(\theta),\,\theta< \pi/2\alpha ,\;B\in \mathfrak{S}^{\star} _{\alpha},$
$$
 (\ln^{1+1/\alpha}x)'_{s^{-1}_{m}(B)}  =o(  m^{-1/\alpha}   ).
$$
where $\alpha$ is positive non integer.
Then the statement of the Theorem 4 \cite{firstab_lit:1kukushkin2021} remains true, i.e. statement S1 holds, moreover we have the following gaps between the eigenvalues
$$
|\lambda_{N_{\nu}+1}|^{-1}-|\lambda_{N_{\nu}}|^{-1}\geq K |\lambda_{N_{\nu}}|^{ \alpha-1},\,K>0,
$$
and the   eigenvalues corresponding to the partial sums become united to the groups  due to the distance of the power type
$$
|\lambda_{N_{\nu}+k}|^{-1}-|\lambda_{N_{\nu}+k-1}|^{-1}\leq K |\lambda_{N_{\nu}+k}|^{\alpha-1},\;
 k=2,3,...,N_{\nu+1}-N_{\nu}.
$$
\end{teo}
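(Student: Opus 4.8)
The plan is to reproduce the scheme of the proof of Theorem~4 in \cite{firstab_lit:1kukushkin2021}, with Lidskii's exponential contours replaced by power-type ones, and to feed in at the decisive step the counting-function estimate that the hypothesis delivers. First I would observe that the assumption $(\ln^{1+1/\alpha}x)'_{s^{-1}_{m}(B)}=o(m^{-1/\alpha})$ produces, by the monotonicity argument that led to \eqref{eq11a}, the relation $\ln r\,n_{B}(r)/r^{\alpha}\to 0$; since $\alpha$ is non-integer it is genuinely the convergence exponent, so Lemma~3 of \cite{firstab_lit:1kukushkin2021} applies with $\rho=\alpha$, and the premise of \eqref{eq7} being met we obtain
\[
\ln r\left(\int\limits_{0}^{r}\frac{n_{B}(t)}{t^{\,p+1}}\,dt+r\int\limits_{r}^{\infty}\frac{n_{B}(t)}{t^{\,p+2}}\,dt\right)r^{\,p-\alpha}\longrightarrow 0,\qquad r\to\infty,\quad p=[\alpha].
\]
Then, exactly as in the proof of Theorem~\ref{T2}, I would pass to $B^{m+1}$ with $m=[\alpha]$ and invoke \eqref{eq13a}; since $m+1>\alpha$ the convergence exponent of $B^{m+1}$ is strictly below $1$, which is the model regime in which the residue calculus leading to \eqref{3h} is carried out.

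Next comes the construction of the contours. The condition $\theta<\pi/2\alpha$ lets me fix $\varepsilon>0$ so small that $\alpha(\theta+\varepsilon)<\pi/2$; then along the two infinite rays $\arg\lambda=\pm(\theta+\varepsilon)$ of $\vartheta(B)$ one has $\mathrm{Re}\,\lambda^{\alpha}\geq(\cos\alpha(\theta+\varepsilon))\,|\lambda|^{\alpha}$, so for fixed $t>0$ the weight $e^{-\lambda^{\alpha}t}$ forces the ray parts of the integral to converge. The closing arcs are taken of power-type radius $R_{\nu}$, each $R_{\nu}$ chosen so that the circle $|\lambda|=R_{\nu}$ is the core of an annulus of width $h_{\nu}\asymp R_{\nu}^{\,1-\alpha}$ free of characteristic numbers; the existence of such annuli is a pigeonhole consequence of $\ln r\,n_{B}(r)/r^{\alpha}\to 0$, and on such an arc $\|B(I-\lambda B)^{-1}\|$ is majorised by a Fredholm-determinant quotient whose growth is governed by the integral displayed above. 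With $N_{\nu}$ the number of characteristic numbers enclosed by the $\nu$-th arc and $\mathcal{A}_{\nu}(\lambda^{\alpha},t)$ the corresponding group of residues written in the form \eqref{3h}, Cauchy's theorem identifies $\frac{1}{2\pi i}\int_{\vartheta(B)}e^{-\lambda^{\alpha}t}B(I-\lambda B)^{-1}f\,d\lambda$ with $\sum_{\nu}\mathcal{A}_{\nu}(\lambda^{\alpha},t)f$.

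The bound $\sum_{\nu}\|\mathcal{A}_{\nu}(\lambda^{\alpha},t)f\|<\infty$ is then obtained by summing the arc-by-arc estimates: the $\nu$-th term is, up to absolute constants, the increment of the left-hand side of the displayed integral between $R_{\nu}$ and $R_{\nu+1}$, so the series telescopes to a finite quantity. It is here that the factor $\ln r$ in \eqref{eq7} is indispensable --- it is precisely the reserve needed to absorb the logarithmic corrections that the hypothesis, being weaker than $s_{n}(B)=o(n^{-1/\alpha})$, permits in the asymptotics of the characteristic numbers. This yields statement S1.

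Finally, the two gap relations are read off from the same construction. The lower estimate is nothing but the statement that the $\nu$-th arc sits in a characteristic-number-free annulus of width $h_{\nu}\asymp R_{\nu}^{\,1-\alpha}\asymp|\lambda_{N_{\nu}}|^{\alpha-1}$, using $R_{\nu}\asymp|\lambda_{N_{\nu}}|^{-1}$. The upper estimate inside a block follows because a consecutive gap of strictly larger order would keep $n(r)$ constant on an interval too long to be compatible with $B\,\bar{\in}\,\mathfrak{S}_{\alpha}$ (equivalently $\sum s_{q}^{\alpha}(B)=\infty$), this divergence being transferred to the characteristic numbers through the sectorial comparison of singular and characteristic numbers used in the proof of Theorem~1. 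The main obstacle, I expect, will be exactly the uniform resolvent bound on the arcs: since $B$ is only compact and sectorial, not normal, $\|B(I-\lambda B)^{-1}\|$ may exceed the reciprocal distance to the spectrum because of Jordan blocks and non-orthogonality of the root vectors, and controlling this blow-up simultaneously with the tight logarithmic bookkeeping is the delicate point of the argument.
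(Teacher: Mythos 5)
Your overall scheme is the right one (pass to $B^{m+1}$ with $m=[\alpha]$, power-type arcs inside eigenvalue-free annuli, group the residues into the sums \eqref{3h}), and your pigeonhole extraction of modulus-free annuli of width $\asymp R_{\nu}^{1-\alpha}$ from $\ln r\, n_{B}(r)/r^{\alpha}\to 0$ is a legitimate alternative to the paper's route to the first gap relation; the paper instead deduces $s_{m}(B)=o(m^{-1/\alpha})$, hence $|\lambda_{m}(B)|=o(m^{-1/\alpha})$ by Corollary 3.2 of Gohberg--Krein, and then extracts the subsequence $\{N_{\nu}\}$ from the Agranovich-type implication $\lim(\mu_{n+1}-\mu_{n})/\mu_{n}^{(p-1)/p}=0\Rightarrow\lim\mu_{n}/n^{p}=0$, proved via the mean value theorem. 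However, your treatment of the second gap relation is a genuine error: a single (or sparsely occurring) within-block gap of larger order than $\mu^{1-\alpha}$ is perfectly compatible with $B\,\bar{\in}\,\mathfrak{S}_{\alpha}$, since divergence of $\sum s_{n}^{\alpha}(B)$ is a global property, so your ``too long to be compatible with divergence'' argument cannot exclude it. In the paper the within-block bound costs nothing: one takes for $\{N_{\nu}\}$ precisely the (infinitely many) indices at which $\mu_{n+1}-\mu_{n}\geq K\mu_{n}^{1-\alpha}$, and the complementary inequality inside the blocks holds by that very selection. Your construction, which fixes the blocks by the annuli rather than by the set of all large gaps, does not deliver this part of the statement.

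The second, and more serious, gap is that you leave unproven exactly the step that constitutes the bulk of the paper's proof: the resolvent bound on the arcs. The paper establishes, via the factorization $(I-\lambda B)^{-1}=(I-\lambda^{m+1}B^{m+1})^{-1}(I+\lambda B+\dots+\lambda^{m}B^{m})$, the canonical-product upper bound for $\Delta_{B^{m+1}}$, and Levin's Theorem 11 (Cartan-type lower bound outside exceptional circles of total radius $<4\eta R_{\nu}$ with the choice $\eta_{\nu}=\delta_{\nu}/6$, $\delta_{\nu}^{-1}=1+K^{-1}|\mu_{N_{\nu}}|^{\alpha}$), the estimate $\|(I-\lambda B)^{-1}\|\leq e^{\gamma(|\lambda|)|\lambda|^{\alpha}}|\lambda|^{m}$ on circles $|\lambda|=\tilde{R}_{\nu}$ inside the annuli, and then shows $\gamma(\tilde{R}_{\nu})\to 0$ because $\ln\{4e/\delta_{\nu}\}\leq C\ln|\mu_{N_{\nu}}|$ is absorbed by $\beta(\sigma\tilde{R}_{\nu}^{m+1})\ln|\mu_{N_{\nu}}|\to 0$, which is precisely where the hypothesis $(\ln^{1+1/\alpha}x)'_{s^{-1}_{m}(B)}=o(m^{-1/\alpha})$ enters through \eqref{eq11a}, \eqref{eq13a} and \eqref{eq7}. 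You name this as ``the delicate point'' and an ``obstacle'' but do not carry it out, so the decisive estimate is assumed rather than proved. Relatedly, your claim that $\sum_{\nu}\|\mathcal{A}_{\nu}(\lambda^{\alpha},t)f\|<\infty$ ``telescopes'' is not how the convergence works: each group satisfies $\|\mathcal{A}_{\nu}f\|\leq J_{\nu}+J_{\nu+1}+J^{+}_{\nu}+J^{-}_{\nu}$, and the series converges because, once $\gamma\to 0$, each arc term is dominated by $Ce^{|\lambda|^{\alpha}\{\gamma(|\lambda|)-t\sin\alpha\delta\}}|\lambda|^{m}$ (exponential decay for fixed $t>0$), while the ray terms are handled by the bound $\|(I-\lambda B)^{-1}\|\leq 1/\sin\psi$ off the sector; no telescoping is involved.
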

\begin{proof}
Using the theorem conditions, we obtain easily
$$
 s _{m}(B)   =o(  m^{-1/\alpha}   ),\;m\rightarrow\infty,
$$
  In accordance with   Corollary  3.2 \cite{firstab_lit:1Gohberg1965} (Chapter II, $\S$ 3.3),  it gives us
$$
|\lambda_{m}(B)|=o(  m^{-1/\alpha}   ),\;m\rightarrow\infty.
$$
In consequence of the latter relation, we have    $\mu_{m}/m^{ 1/\alpha}\geq C,$ where $\mu_{m}=\lambda^{-1}_{m}(B).$ Using the implication
$$
\lim\limits_{n\rightarrow\infty}(\mu_{n+1}-\mu_{n})/\mu^{(p-1)/p}_{n}=0,\;\Longrightarrow  \lim\limits_{n\rightarrow\infty} \mu_{n}/n^{p}=0,\;p>0,
$$
the proof of which can be found in the proof of Lemma 2 \cite{firstab_lit:2Agranovich1994}, we can claim that there exists    a subsequence $\{\lambda_{N_{\nu}}\}_{\nu=0}^{\infty},$
 such that
$$
|\mu_{N_{\nu}+1}|-|\mu_{N_{\nu}}|\geq K |\mu_{N_{\nu}}|^{1-\alpha},\,K>0.
$$
Indeed, for the case $p\geq1,$  making the auxiliary denotation $\xi_{n}:=\mu^{1/p}_{n},$ we can rewrite the assumed  implication   as follows
$$
\lim\limits_{n\rightarrow\infty}(\xi^{p}_{n+1}-\xi^{p}_{n})/\xi^{ p-1 }_{n}=0,\;\Longrightarrow  \lim\limits_{n\rightarrow\infty} \xi_{n}/n=0,
$$
denoting $\Delta\xi_{n}:=\xi _{n+1}-\xi _{n},$ we can easily obtain, due to the application of the mean value theorem (or Lagrange theorem), the estimate $\xi^{p}_{n+1}-\xi^{p}_{n}\geq p\, \xi^{p+1}_{n} \Delta\xi_{n}.$  It is clear that it gives us   the implication
$$
\left\{\lim\limits_{n\rightarrow\infty} \Delta\xi_{n}=0,\Rightarrow \lim\limits_{n\rightarrow\infty} \xi_{n}/n=0\right\}\Rightarrow\left\{\lim\limits_{n\rightarrow\infty}(\mu_{n+1}-\mu_{n})/\mu^{(p-1)/p}_{n}=0,\;\Longrightarrow  \lim\limits_{n\rightarrow\infty} \mu_{n}/n^{p}=0\right\},
$$
however  the fact that the implication in the first term holds   can be established with no difficulties.  The proof corresponding to the case $p<1$ is absolutely analogous, we just need notice
$$
\lim\limits_{n\rightarrow\infty}(\xi^{p}_{n+1}-\xi^{p}_{n})/\xi^{ p-1 }_{n}=0,\Rightarrow \lim\limits_{n\rightarrow\infty} \xi^{p}_{n} /\xi^{ p  }_{n+1}=1,
$$
and deal with the following implication
$$
\lim\limits_{n\rightarrow\infty}(\xi^{p}_{n+1}-\xi^{p}_{n})/\xi^{ p-1 }_{n+1}=0,\;\Longrightarrow  \lim\limits_{n\rightarrow\infty} \xi_{n}/n=0
$$
in the analogous way.

 Let us find $\delta_{\nu}$ from the condition $R_{\nu}=K|\mu_{N_{\nu}}|^{1-\alpha}+|\mu_{N_{\nu}}|,\,R_{\nu}(1-\delta_{\nu})=|\mu_{N_{\nu}}|,$ then $\delta_{\nu}^{-1}=1+K^{-1}|\mu_{N_{\nu}}|^{\alpha}.$ Note that by virtue of such a choice, we have $R_{\nu}<R_{\nu+1}(1-\delta_{\nu+1}).$

  In accordance with the theorem  condition,  established above relation \eqref{eq11a}, we have   $  n_{B}(r)=o( r^{\alpha}/\ln r).$
 Consider a contour
$$
 \vartheta(B):= \left\{\lambda:\;|\lambda|=r_{0}>0,\,|\mathrm{arg} \lambda|\leq \theta+\varepsilon\right\}\cup\left\{\lambda:\;|\lambda|>r_{0},\; |\mathrm{arg} \lambda|=\theta+\varepsilon\right\},
 $$
 here the number $r$ is chosen so that the  circle  with the corresponding radios does not contain values $\mu_{n},\,n\in \mathbb{N}.$
Applying  Lemma 5  \cite{firstab_lit:1kukushkin2021},  we have that there exists such a sequence $\{\tilde{R}_{\nu}\}_{0}^{\infty}:\;(1-\delta_{\nu }) R_{\nu}<\tilde{R}_{\nu}<R_{\nu }$ that the following estimate holds
$$
\|(I-\lambda B )^{-1}\|_{\mathfrak{H}}\leq e^{\gamma (|\lambda|)|\lambda|^{\alpha}}|\lambda|^{m},\,m=[\alpha],\,|\lambda|=\tilde{R}_{\nu},
$$
where
$$
\gamma(|\lambda|)= \beta ( |\lambda|^{m+1})  +(2+\ln\{4e/\delta_{\nu}\}) \beta(\sigma |\lambda|  ^{m+1}) \,\sigma ^{\alpha/(m+1)},\,\sigma:=\left(\frac{2e}{1-\delta_{0}}\right)^{m+1},\,|\lambda|=\tilde{R}_{\nu},
$$
$$
\;\beta(r )= r^{ -\frac{\alpha}{m+1} }\left(\int\limits_{0}^{r}\frac{n_{B^{m+1}}(t)dt}{t }+
r \int\limits_{r}^{\infty}\frac{n_{B^{m+1}}(t)dt}{t^{ 2  }}\right).
$$
Here we should explain that that the opportunity to obtain this estimate is based   upon  the estimation of the Fredholm  determinant $\Delta_{B^{m+1}}(\lambda^{m+1})$  absolute value  from bellow (see \cite[p.8]{firstab_lit:1Lidskii}). In its own turn the latter  is implemented via  the general estimate from bellow of the absolute value of a holomorphic function (Theorem 11 \cite[p.33]{firstab_lit:Eb. Levin}).
  However, to avoid any kind of misunderstanding let us implement a scheme of reasonings of  Lemma 5 consistently.\\

 Using  the theorem condition, we have  $B\in \mathfrak{S}_{\alpha+\varepsilon},\,\varepsilon>0.$
Obviously, we have
\begin{equation}\label{8a}
(I-\lambda B )^{-1}=(I-\lambda^{m+1}B^{m+1})^{-1}(I+\lambda B+\lambda^{2} B^{2}+...+\lambda^{m}B^{m}).
\end{equation}
In accordance with Lemma 3 \cite{firstab_lit:1Lidskii}, for sufficiently small $\varepsilon>0,$ we have
$$
\sum\limits_{n=1}^{\infty}s^{\frac{\alpha+\varepsilon}{m+1}}_{n}(  B^{m+1} )\leq \sum\limits_{n=1}^{\infty}s^{ \alpha+\varepsilon }_{n}( B )<\infty,
$$
       Applying  inequality  (1.27) \cite[p.10]{firstab_lit:1Lidskii} (since $  \rho  /(m+1)<1$), we get
$$
\|\Delta_{B^{m+1}}(\lambda^{m+1})(I-\lambda^{m+1} B^{m+1})^{-1}\| \leq C\prod\limits_{i=1}^{\infty}\{1+|\lambda^{m+1} s_{i}( B^{m+1} )|\},
$$
where $\Delta_{B^{m+1}}(\lambda^{m+1})$ is a Fredholm  determinant of the operator $B^{m+1}$  (see \cite[p.8]{firstab_lit:1Lidskii}).
Rewriting the formulas in accordance with the terms of the entire functions theory, we get
$$
  \prod\limits_{n=1}^{\infty}\{1+|\lambda^{m+1} s_{n}( B^{m+1} )|\}=\prod\limits_{n=1}^{\infty} G\left(  |\lambda|^{m+1}/ a_{n} ,p\right),
$$
where $a_{n}:=-s^{-1}_{n}(B^{m+1}),\,p=[\alpha /(m+1)]=0,$
using Lemma 2  \cite{firstab_lit:1kukushkin2021},   we get
$$
 \prod\limits_{n=1}^{\infty} G\left(  r^{m+1}/ a_{n} ,p\right)\leq  e^{ \beta (r^{m+1})r^{\alpha}},\,r>0.
$$
In accordance with Theorem 11 \cite[p.33]{firstab_lit:Eb. Levin},   and regarding to the case,   the following estimate holds
$$
|\Delta_{B^{m+1}}(\lambda^{m+1})|\geq e^{-(2+\ln\{2e/3\eta\})\ln\xi_{m}},\;\xi_{m}= \!\!\!\max\limits_{\psi\in[0,2\pi /(m+1)]}|\Delta_{B^{m+1}}([2e R_{\nu}e^{i\psi}]^{m+1})|,\,|\lambda|\leq R _{\nu},
$$
except for the exceptional set of circles with the sum of radii less that $4\eta R_{\nu},$  where $\eta$ is an arbitrary number less than  $2e/3 .$  Thus  to find the desired circle $\lambda= e^{i\psi}\tilde{R}_{\nu}$ belonging to the ring, i.e. $R_{\nu}(1-\delta_{\nu})<\tilde{R}_{\nu}<R_{\nu},$   we have to   choose $\eta$ satisfying the inequality
$$
4\eta R_{\nu}<R_{\nu}-R_{\nu}(1-\delta_{\nu})=\delta_{\nu} R_{\nu};\;\eta< \delta_{\nu}/4,
$$
for instance let us choose $\eta_{\nu}=\delta_{\nu}/6,$ here we should note that $\delta_{\nu}$ tends to zero, this is why without loss of generality of reasonings we are free in such a choice since the inequality $\eta_{\nu}<3 e/2$ would be satisfied for a sufficiently large $\nu.$ Under such assumptions, we can rewrite the above estimate in the form
$$
|\Delta_{B^{m+1}}(\lambda^{m+1})|\geq e^{-(2+\ln\{4e/\delta_{\nu}\})\ln\xi_{m}},\; |\lambda|=\tilde{R} _{\nu}.
$$
Note that in accordance with the estimate (1.21) \cite[p.10]{firstab_lit:1Lidskii}, Lemma 2  \cite{firstab_lit:1kukushkin2021} we have
$$
|\Delta_{B^{m+1}}(\lambda)|\leq  \prod\limits_{i=1}^{\infty}\{1+|\lambda s_{i}( B^{m+1} )|\}=\prod\limits_{n=1}^{\infty} G\left(  |\lambda|/ a_{n} ,p\right)\leq
 e^{ \beta (|\lambda| )|\lambda|^{\alpha/(m+1)}}.
$$
Using this estimate, we obtain
$$
\xi_{m}\leq e^{ \beta ([2e  R_{\nu}]^{m+1})( 2e R_{\nu}  )^{\alpha}}.
$$
Substituting, we get
$$
|\Delta_{B^{m+1}}(\lambda^{m+1})|^{-1}\leq e^{(2+\ln\{4e/\delta_{\nu}\})\beta ([2e  R_{\nu}]^{m+1})( 2e R_{\nu}  )^{\rho}},\; |\lambda|=\tilde{R} _{\nu}.
$$
Having noticed the facts
$$
 1-\delta_{\nu}  = \frac{|\mu_{N_{\nu}}|^{\alpha}}{K+|\mu_{N_{\nu}}|^{\alpha}}=1-\frac{K}{K+|\mu_{N_{\nu}}|^{\alpha}}\geq 1-\frac{K}{K+|\mu_{N_{0}}|^{\alpha}}= 1-\delta_{0} ;
 $$
 $$
 \;R_{\nu}<\tilde{R}_{\nu}(1-\delta_{\nu})^{-1}\leq \tilde{R}_{\nu}(1-\delta_{0})^{-1},
$$
we obtain
$$
|\Delta_{B^{m+1}}(\lambda^{m+1})|^{-1}\leq e^{(2+\ln\{4e/\delta_{\nu}\})\beta (\sigma \tilde{R}^{m+1}_{\nu} )   \tilde{R}^{\alpha}_{\nu} \sigma^{\alpha/(m+1)}  },\; |\lambda|=\tilde{R} _{\nu}.
$$
Combining the above estimates, we get

$$
 \|\Delta_{B^{m+1}}(\lambda^{m+1})(I-\lambda^{m+1} B^{m+1})^{-1}\| =|\Delta_{B^{m+1}}(\lambda^{m+1})|\cdot\|(I-\lambda^{m+1} B^{m+1})^{-1}\| \leq e^{ \beta (|\lambda|^{m+1})|\lambda|^{\alpha}};
$$
$$
\|(I-\lambda^{m+1} B^{m+1})^{-1}\|\leq e^{ \beta (|\lambda|^{m+1})|\lambda|^{\alpha}}|\Delta_{B^{m+1}}(\lambda^{m+1})|^{-1}=
e^{\gamma (|\lambda|)|\lambda|^{\alpha}}  ,\,|\lambda|=\tilde{R}_{\nu}.
$$
Consider relation \eqref{8a}, we have
$$
 \|(I-\lambda B )^{-1}\| \leq\|(I-\lambda^{m+1}B^{m+1})^{-1}\|  \cdot\|(I+\lambda B+\lambda^{2} B^{2}+...+\lambda^{m}B^{m})\| \leq
$$
$$
\leq\|(I-\lambda^{m+1}B^{m+1})^{-1}\|  \cdot \frac{|\lambda|^{m+1}\|B\|^{m+1}-1}{|\lambda|\cdot\|B\|-1}\leq C e^{\gamma (|\lambda|)|\lambda|^{\alpha}}|\lambda|^{m},\,|\lambda|=\tilde{R}_{\nu}.
$$
Applying the obtained estimate for the norm,  we can  claim that  for a sufficiently   small $\delta>0,$   there exists an arch
$\tilde{\gamma}_{ \nu }:=\{\lambda:\;|\lambda|=\tilde{R}_{\nu},\,|\mathrm{arg } \lambda|< \theta +\varepsilon\}$    in the ring $(1-\delta_{\nu})R_{\nu}<|\lambda|<R_{\nu},$ on which the following estimate holds
$$
 J_{  \nu  }: =\left\|\,\int\limits_{\tilde{\gamma}_{ \nu }}e^{-\lambda^{\alpha}t}B(I-\lambda B)^{-1}f d \lambda\,\right\| \leq \,\int\limits_{\tilde{\gamma}_{ \nu }}e^{- t \mathrm{Re}\lambda^{\alpha}}\left\|B(I-\lambda B)^{-1}f \right\|  |d \lambda|\leq
$$
 $$
 \leq \|f\| C  e^{\gamma (|\lambda|)|\lambda|^{\alpha} }|\lambda|^{m }\int\limits_{-\theta-\varepsilon}^{\theta+\varepsilon} e^{- t \mathrm{Re}\lambda^{\alpha}} d \,\mathrm{arg} \lambda,\,|\lambda|=\tilde{R}_{\nu}.
$$
Using the theorem conditions,    we get     $\,|\mathrm{arg} \lambda |<\pi/2\alpha,\,\lambda\in \tilde{\gamma}_{ \nu },\,\nu=0,1,2,...\,,$ we get
$$
\mathrm{Re }\lambda^{\alpha}\geq |\lambda|^{\alpha} \cos \left[(\pi/2\alpha-\delta)\alpha\right]= |\lambda|^{\alpha} \sin     \alpha \delta,
$$
 where $\delta$ is a sufficiently small number.  Therefore
\begin{equation}\label{13b}
 J_{\nu} \leq C e^{|\lambda|^{\alpha}\{\gamma (|\lambda|)-t  \sin     \alpha \delta\}}|\lambda|^{m }
 ,\,m=[\alpha],\,|\lambda|=\tilde{R}_{\nu}.
\end{equation}
It is clear that within the contour $\vartheta(B),$ between the arches $\tilde{\gamma}_{ \nu },\tilde{\gamma}_{ \nu+1 }$ (we denote the boundary of this  domain by $\gamma_{ \nu}$) there  lie the eigenvalues only for which the following relation holds
$$
|\mu_{N_{\nu}+k }|-|\mu_{N_{\nu}+k-1 }|\leq K |\mu_{N_{\nu}+k-1 }|^{1-\alpha},\;
 k=2,3,...,N_{\nu+1}-N_{\nu} .
$$
Using Lemma 8, \cite{firstab_lit:1kukushkin2021},  we   obtain a relation
$$
 \frac{1}{2\pi i}\int\limits_{\gamma_{ \nu}}e^{-\lambda^{\alpha}t}B(I-\lambda B)^{-1}f d \lambda
=     \sum\limits_{q=N_{\nu}+1}^{N_{\nu+1}}\sum\limits_{\xi=1}^{m(q)}\sum\limits_{i=0}^{k(q_{\xi})}e_{q_{\xi}+i}c_{q_{\xi}+i}(t).
$$
It is clear that to obtain the desired result, we should prove that the series composed  of the above terms converges. Consider the following auxiliary denotations originated from the reasonings $\gamma_{\nu_{+}}:=
\{\lambda:\, \tilde{R}_{\nu}<|\lambda|<\tilde{R}_{\nu+1},\, \mathrm{arg} \lambda  =\theta   +\varepsilon\},\,\gamma_{\nu_{-}}:=
\{\lambda:\,\tilde{R}_{\nu}<|\lambda|<\tilde{R}_{\nu+1},\, \mathrm{arg} \lambda  =-\theta   -\varepsilon\},$
$$
J^{+}_{\nu}: =\left\|\,\int\limits_{\gamma_{\nu_{+}}}e^{-\lambda^{\alpha}t}B(I-\lambda B)^{-1}f d \lambda\,\right\|,\; J^{-}_{\nu}:= \left\|\,\int\limits_{\gamma_{\nu_{-}}}e^{-\lambda^{\alpha}t}B(I-\lambda B)^{-1}f d \lambda\,\right\|,
$$
wee have
$$
\left\| \int\limits_{\gamma_{ \nu}}e^{-\lambda^{\alpha}t}B(I-\lambda B)^{-1}f d \lambda\right\|\leq J_{\nu}+J_{\nu+1}+J^{+}_{\nu}+J^{-}_{\nu}.
 $$
Thus, it is clear that to establish S1, it suffices to establish the facts
\begin{equation}\label{eq16}
\sum\limits_{\nu=0}^{\infty}J_{\nu}<\infty,\;\sum\limits_{\nu=0}^{\infty}J^{+}_{\nu}<\infty,\;\sum\limits_{\nu=0}^{\infty}J^{-}_{\nu}<\infty.
\end{equation}
Consider the right-hand side of formula \eqref{13b}.
Substituting $\delta^{-1}_{\nu},$ we have
$$
\ln\{4e/\delta_{\nu}\} = 1+ \ln4 +\ln \{1+K^{-1}|\mu_{N_{\nu}}|^{\alpha}\} \leq C \ln   |\mu_{N_{\nu}}|     .
$$
Hence, to obtain the desired result we should prove that $ \beta (\sigma| \mu_{N_{\nu}}| ^{m+1})\ln  |\mu_{N_{\nu}}|\rightarrow 0,\,\nu\rightarrow\infty.$ Note that in accordance with relation \eqref{eq7}, we can prove the latter relation, if we show that
\begin{equation}\label{13a}
\ln r\frac{n_{B^{m+1}}(r^{m+1})}{r^{\alpha} }\rightarrow 0,\;r\rightarrow\infty.
\end{equation}
In accordance with \eqref{eq11a}, we have
\begin{equation*}
(\ln^{1+1/\alpha}x)'_{s^{-1}_{m}(B)}  =o(  m^{-1/\alpha}   )
 , \Longrightarrow \left(\ln r\frac{n(r)}{r^{\rho}}\,\rightarrow 0,\,r\rightarrow\infty\right),
\end{equation*}
Applying \eqref{eq13a}, we have
$$
n_{B^{m+1}}(r^{m+1})\leq (m+1) n_{B}(r),
$$
hence
$$
\ln r\frac{n_{B^{m+1}}(r^{m+1})}{r^{\alpha} }\leq (m+1)\ln r\frac{n_{B }(r )}{r^{\alpha} },
$$
what gives us the desired result, i.e. we compleat the proof of the first relation \eqref{eq16}.

In accordance with Lemma 6 \cite{firstab_lit:1kukushkin2021}, we can claim  that   on each ray $\zeta$ containing the point zero and not belonging to the sector $\mathfrak{L}_{0}(\theta)$ as well as the  real axis, we have
$$
\|(I-\lambda B)^{-1}\|\leq \frac{1}{\sin\psi},\,\lambda\in \zeta,
$$
where $\,\psi = \min \{|\mathrm{arg}\zeta -\theta|,|\mathrm{arg}\zeta +\theta|\}.$ Applying this estimate, the established above estimate $\mathrm{Re }\lambda^{\alpha}\geq |\lambda|^{\alpha} \sin     \alpha \delta,$ we get
$$
 J^{+}_{\nu} \leq C\|f\|  \!\!\! \int\limits_{ \tilde{R}_{\nu} }^{\tilde{R}_{\nu+1} }  e^{- t \mathrm{Re }\lambda^{\alpha}}   |d   \lambda|\leq C \|f\| e^{-t \tilde{R}_{\nu}  ^{\alpha} \sin     \alpha\delta}   \!\!\!\int\limits_{ \tilde{R}_{\nu} }^{\tilde{R}_{\nu+1} }   |d   \lambda|=
 $$
 $$
 = C\|f\| e^{-t \tilde{R}_{\nu}  ^{\alpha} \sin     \alpha\delta}( \tilde{R}_{\nu+1}-\tilde{R}_{\nu}) .
$$
It obvious, that the same estimate can be obtained for $J^{-}_{\nu}.$ Therefore, the second and the third relations \eqref{eq16} hold. The proof is complete.
 \end{proof}
\begin{remark} Application of the results established in paragraph 1,  under the imposed sectorial condition upon   the compact  operator $B,$   gives us
$$
\left\{(\ln^{1+1/\rho}x)'_{\lambda^{-1}_{m}(\mathfrak{Re}B)}  =o(  m^{-1/\rho}   )\right\}\Longrightarrow\left\{(\ln^{1+1/\rho}x)'_{s^{-1}_{m}(B)}  =o(  m^{-1/\rho}   )\right\},
$$
what becomes clear if we recall $s_{m}(B)\leq C \lambda_{m}(\mathfrak{Re}B).$ However to establish the equivalence
$$
B\in\mathfrak{S}^{\star}_{\rho}\Longleftrightarrow \mathfrak{Re} B\in\mathfrak{S}^{\star}_{\rho},
$$  we require the estimate from bellow $s_{m}(B)\geq C \lambda_{m}(\mathfrak{Re}B),$ which in its own turn can be established due to a more subtle technique and under  stronger  conditions regarding the operator $B.$ In particular the second representation theorem is involved and conditions upon the matrix coefficients of the operator $B$ are imposed in accordance with which they should decrease sufficiently fast. In addition,  the application of Theorem 5 \cite{firstab_lit(arXiv non-self)kukushkin2018}  (if $B$ satisfies its conditions)  gives us the relation
$
\lambda_{n}(\mathfrak{Re}   B)\asymp \lambda_{n}(H^{-1}),\,H=\mathfrak{Re}B^{-1},
$
the latter allows one to deal with unbounded operators reformulating Theorem \ref{T3} in terms of the  Hermitian real component. It should be noted that the made remark  remains true regarding the implication
$$
\left\{\lambda_{m}(\mathfrak{Re} B)=o(m^{-1/\rho})\right\}\Longrightarrow  \left\{s_{m}(B)=o(m^{-1/\rho})\right\},
$$
and since the scheme of the reasonings is the same  we left  them  to the reader.
\end{remark}

\begin{ex}\label{E2} Using decomposition \eqref{eq6},  let us construct artificially  the sectorial operator $B$ satisfying the Theorem \ref{T2} condition. Observe the produced above example $s_{m}(B)=(m \ln m)^{-1/\rho},$ as it was said above the latter condition guarantees $B\in \mathfrak{S}_{\rho}^{\star}.$ However, the problem is how to choose in the polar decomposition formula  the unitary operator  $U$ providing the sectorial property of the operator $B.$  The following approach is rather rough but  can be used to supply the desired example, in terms of  formula  \eqref{eq6}, we get
$$
 (Bf,f)=\sum\limits_{n=1}^{\infty}s_{n}(B)(f,e_{n})\overline{(f,g_{n})}.
 $$
 Therefore, if we impose the condition
 $$
 \left|\arg (f,e_{n})-   \arg (f,g_{n})\right|<\theta,\,n\in \mathbb{N},\,f\in \mathrm{R}(B),
 $$
 then we obtain the desired sectorial condition, where $\theta$ is a semi-angle of the sector. It is remarkable that that the selfadjoint case corresponds to the zero difference since in this case $U=I.$ However, we should remark that following to the classical mathematical style it is necessary to formulate conditions of the sectorial property in terms of the eigenvectors of the operator $|B|$ but it is not so easy matter requiring comprehensive analysis  of the operator $U$ properties provided with   concrete cases.
\end{ex}

\end{document}